\newtheorem{thm}{Theorem}[section]
\newtheorem{cor}[thm]{Corollary}
\newtheorem{lem}[thm]{Lemma}
\newtheorem{prop}[thm]{Proposition}
\theoremstyle{definition}
\newtheorem{defn}[thm]{Definition}
\newtheorem{rem}[thm]{Remark}
\newtheorem{ex}[thm]{Example}
\numberwithin{equation}{section}
\newcommand{\cX}{\mathcal{X}}
\newcommand{\cG}{\mathcal{G}}
\newcommand{\ep}{\varepsilon}
\newcommand{\dbar}{\bar \partial}
\newcommand{\ddbar}{\partial\bar{\partial}}
\newcommand{\h}{\widehat}
\newcommand{\w}{\widetilde}
\newcommand{\ol}{\overline}
\newcommand{\mc}{\mathbb{C}}
\newcommand{\im}{\mathrm{Im}}
\begin{document}


\title{Nakano Positivity of direct image sheaves of adjoint line bundles with mild singularities}

\author{Yongpan Zou}
\address{Graduate School of Mathematical Sciences, The University of Tokyo, 3-8-1 Komaba, Meguro-Ku, Tokyo 153-8914, Japan}
\email{zouyongpan@gmail.com}

\keywords{Nakano Positivity, direct image sheaves, $L^2$ estimate, singular metric.}
\subjclass{32L99}
\date{\today}


\begin{abstract}
In this paper, we explore the Nakano positivity of direct image sheaves of twisted relative canonical bundles when the metric of the twisted line bundle has mild singularities. We address this problem using two methods: $L^2$ estimates and curvature computations within the framework of $L^2$ Hodge theory.

\end{abstract}

\maketitle
\setcounter{tocdepth}{1}


\section{Introduction}
This note aims to study the Nakano positivity properties of direct image sheaves of relative canonical bundle twisted by holomorphic line bundle with a possible singular metric. More specifically, let $p:\cX\to D$ be a holomorphic proper morphism from a K\"ahler manifold $\cX$ onto the polydisk $D$, and let $L$ be a holomorphic line bundle endowed with a possibly singular hermitian metric $h_L$. In general, we assume that the metric $h_L$ is positively curved, i.e., its curvature current $i\Theta_{h_L}(L)\ge 0$. An important object of study is the direct image sheaf
$$ \mathcal G:=p_*((K_{\cX/D}+L)\otimes \mathcal I(h_L))
$$
endowed with the metric which induced by $h_L$, here $\mathcal I(h_L)$ be the multiplier ideal sheaf associated with the metric $h_L$. 
Given the numerous important applications in algebraic and complex geometry, this topic has attracted significant attention from researchers.

Griffiths positivity and Nakano positivity of holomorphic vector bundles are two major positive concepts in complex geometry. Between the two, Nakano positivity is stronger than Griffiths positivity. In \cite{Bo09}, Berndtsson proved the induced metric on $\mathcal G$ has nonnegative curvature in the sense of Nakano when the metric $h_L$ on $L$ is smooth. Since then, researchers have begun to investigate positivity properties in the singular case. 
Regarding Griffiths positivity, when the metric $h$ is smooth, it is well known that a holomorphic vector bundle $(E, h)$ is Griffiths semi-positive if and only if its dual $(E^{\ast}, h^{\ast})$ is Griffiths semi-negative. 
This equivalence allows for the generalization of Griffiths positivity to the singular case. Consequently, the theory of Griffiths positivity for direct image sheaves of adjoint line bundles with singular metrics has been established (see \cite{PT}, \cite{Paun18}, \cite{HPS18}, \cite{DWZZ18} for various results and generalizations).
Since Griffiths positivity in a singular setting has been thoroughly studied, it is natural to explore Nakano positivity in this context. However, the duality property is no longer valid, which complicates the analysis of Nakano positivity (for one definition of Nakano positivity of singular Hermitian metrics on holomorphic vector bundles, consult\cite{Rau15}). Recently, Cao--Guenancia--P\u{a}un generalized Berndtsson's curvature formulas to cases where the metric $h_L$ has analytic singularities and the base space is $1$-dimensional (\cite{CGP}). By using the horizontal lift, they successfully obtained a representative of the holomorphic section and established the general curvature formula through this representative.

Meanwhile, in \cite{DNWZ20}, the authors provided a characterization of the Nakano positivity of holomorphic vector bundle with smooth metric via the optimal $L^2$-estimate condition. 
Due to the fact that in our main case $Z$ below, the metric $h_L$ of the line bundle is singular, but it is smooth concerning the $t$ variables. It is possible to use the optimal $L^2$-estimate condition to study the Nakano positivity in these types of singular cases, as I introduce the settings of my study below.

\subsection{Set-up of case $Z$} \label{setup2}
Let $p:\cX\to D$ be a holomorphic proper fibration (i.e., submersion) from a $(n+m)$-dimensional K\"ahler manifold $\cX$ onto the bounded pseudoconvex domain $D\subset \mathbb C^{m}$, and let $(L,h_L)$ be a holomorphic line bundle endowed with a possibly singular hermitian metric $h_L$. Let $\Omega\subset \cX$ be a coordinate subset on $\cX$. We take $(z_1,\dots z_n, t _1,\dots, t_m)$ as a coordinate system on $\Omega$ such that the last $m$ variables $t_1,\dots, t_m$ corresponds to the map $p$ itself. 
\begin{enumerate} [label={\color{violet}(Z.\arabic*)}]
\item \label{Z1} The metric $h_L = e^{-\psi_L}$ and the local weights $\psi_L$ have \textbf{Poincar\'e type singularities}, or \textbf{logarithmic type singularities}, or \textbf{klt type singularities} along $E$, as illustrated in Example \ref{3-ex} below.

\item \label{Z2} The Chern curvature of $(L,h_L)$ satisfies
$i\Theta_{h_L}(L):= i\ddbar \psi_L \ge 0$
in the sense of currents on $\cX$.

\item \label{Z3} The multiplier ideal sheaf $\mathcal I(h_{L_t}) = \mathcal{O}_{X_t}$ for each $t\in D$.
\item \label{Z4} The K\"ahler manifold $\cX$ contains a Stein Zariski open subset.
\end{enumerate}
\medskip

\noindent With these assumptions we set
\begin{align} \label{F}
\mathcal F:=p_*((K_{\cX/D}+L)\otimes \mathcal I(h_L)) = p_*(K_{\cX/D}+L).
\end{align}
\noindent By assumption \ref{Z2} and the K\"ahler version of Ohsawa-Takegoshi theorem (cf. \cite{CaoOT}), $\mathcal F$ is indeed a vector bundle and $\mathcal F_t = H^0(X_t, K_{X_t} + L_t)$ for every $t \in D$. There is a Hermitian  metric $\|\cdot \|$ on $\mathcal F$ induced by $h_L$, i.e., for any $u_t \in \mathcal F_t$,
$$ \|u_t\|^2 = \int_{X_t} c_n u\wedge \bar u e^{-\psi_L} ~~\text{with} ~~ c_n=(\sqrt{-1})^{n^2} $$

 By assumption \ref{Z1}, the metric $\|\cdot \|$ on the direct image sheaf $\mathcal F$ is well-defined and smooth. Indeed, we can use partitions of unity to reduce to checking that integrals of the form $\int_{\Omega\cap X_t} |u_t|^2e^{-\psi_L}$ vary smoothly with $t$, where $\psi_L$ is given by the expression in \ref{Z1}. The reader can consult Lemma $2.2$ in \cite{CGP} for more details. We now introduce the main theorem of this paper.
\begin{thm} \label{Thm2}
Under the set-up of case $Z$, the Hermitian holomorphic vector bundle $(\mathcal F, \|\cdot\|)$ over $D$ defined in \eqref{F} is semi-positive in the sense of Nakano.
\end{thm}


\begin{rem}
To solve the $\dbar$-equation with a singular weight on $\cX$, we must assume that the K\"ahler manifold $\cX$ contains a Stein Zariski open subset, which is why assumption \ref{Z4} is required. An important example is when $p: \cX \to D$ is a projective morphism, in which case assumption \ref{Z4} is naturally satisfied.
\end{rem}

\begin{ex} \label{3-ex}
We provide some important examples. We assume that there exists a divisor $E= E_1+\dots + E_N$  whose support is contained in the total space $\cX$ of $p$
such that the following requirements are fulfilled. The divisor $E$ intersects each fiber transversally, i.e., for every $t\in D$ the restriction divisor $E_t:= E|_{X_t}$ of $E$ on each fiber $X_t$ has simple normal crossings. Let $\Omega\subset \cX$ be a coordinate subset on $\cX$. We take $(z_1,\dots z_n, t _1,\dots, t_m)$ a coordinate system on $\Omega$ such that the last $m$ variables $t_1,\dots, t_m$ corresponds to the map $p$ itself and such that $z_1\dots z_r= 0$ is the local equation of $E\cap \Omega$.
\begin{enumerate}
\item The metric $h_L$ has \textbf{Poincar\'e type singularities} along $E$, i.e., its local weights $\psi_L$ on $\Omega$ can be written as
\begin{equation*}
\psi_L \equiv - \sum_{I} b_I\log\left(\big(\prod_{i\in I}|z_i|^{2m_i}\big)\big(\phi_I(z)-\log \big(\prod_{i\in I}|z_i|^{2k_i}\big)\big)\right)
\end{equation*}
modulo $\mathcal C^\infty$ functions, where $b_I$ are positive real numbers for all $I$, $m_i, k_i$ are positive integers. All $(\phi_I)_I$ are smooth functions on $\Omega$. The set of indexes in the sum coincides with the non-empty subsets of $\{1,\dots, r\}$.
\item  The metric $h_L$ has \textbf{logarithmic type singularities} along $E$, i.e., its local weights $\psi_L$ on $\Omega$ can be written as
\begin{equation*}
\psi_L \equiv - \sum_{I} b_I\log \big(\phi_I(z)-\log (\prod_{i\in I}|z_i|^{2k_i}) \big)
\end{equation*}
modulo $\mathcal C^\infty$ functions, where $b_I$ are positive real numbers satisfying that $b_I < 1$ for all $I$, all $k_i$ are positive integers and $(\phi_I)_I$ are smooth functions on $\Omega$. The set of indexes in the sum coincides with the non-empty subsets of $\{1,\dots, r\}$.
\item The metric $h_L$ has \textbf{Kawamata (klt) type singularities} along $E$, i.e., its local weights $\psi_L$ on $\Omega$ can be written as
\begin{equation*}
\psi_L \equiv  \sum_{i\in I} a_i\log |z_i|^2
\end{equation*}
modulo $\mathcal C^\infty$ functions, where $a_i$ are real numbers satisfying that $a_i < 1$ for all $i$. The set of indexes in the sum coincides with the non-empty subsets of $\{1,\dots, r\}$.
\end{enumerate}
\end{ex}

In Section \ref{section-3}, we aim to remove assumption \ref{Z4} by investigating the case where the metric $h_L$ has Kawamata-type singularities. We utilize the strong decomposition in $L^2$ Hodge theory. In other words, we attempt to generalize Berndtsson's seminal result in \cite{Bo09} to this singular setting.

\subsection*{Acknowledgement}
The author would like to express his gratitude to Prof. Shigeharu Takayama for his guidance and warm encouragement. The author is also grateful to Prof. Junyan Cao for his comments on this manuscript.

\section{Nakano positivity via the optimal $L^2$-estimate} \label{section-2}
In this section, we will prove Theorem \ref{Thm2}. In \cite{DNWZ20}, the authors investigate the positivity properties of Hermitian holomorphic vector bundles using $L^p$-estimates of the $\bar\partial$ operator. They introduce several $L^p$-estimate (extension) conditions, among which one is referred to as the optimal $L^2$-estimate condition.

\begin{defn}\label{def:Lp estimate}
Let $(X,\omega)$ be a K\"{a}hler manifold of dimension $n$, which admits a positive Hermitian holomorphic line bundle and $(E,h)$ be a (singular) Hermitian vector bundle over $X$. The vector bundle $(E,h)$ satisfies \emph{the optimal $L^2$-estimate condition}
if for any positive Hermitian holomorphic line bundle $(A,h_A)$ on $X$,
for any $f\in\mathcal{C}^\infty_c(X,\wedge^{n,1}T^*_X\otimes E\otimes A)$ with $\bar\partial f=0$,
there is $u\in L^p(X,\wedge^{n,0}T_X^*\otimes E\otimes A)$, satisfying $\bar\partial u=f$ and
$$\int_X|u|^2_{h\otimes h_A}dV_\omega\leq \int_X\langle B_{A,h_A}^{-1}f,f\rangle dV_\omega,$$
provided that the right-hand side is finite,
where $B_{A,h_A}=[i\Theta_{A,h_A}\otimes \text{Id}_E,\Lambda_\omega]$.
\end{defn}

One of the main results in \cite{DNWZ20} was the following characterization of Nakano positivity in terms of optimal $L^2$-estimate condition.

\begin{thm} \cite[Theorem 1.1]{DNWZ20} \label{thm:theta-nakano text_intr}
Let $(X,\omega)$ be a  K\"{a}hler manifold of dimension $n$ with a K\" ahler metric $\omega$, which admits a positive Hermitian holomorphic  line bundle, $(E,h)$ be a smooth Hermitian vector bundle over $X$,
and $\theta\in \mathcal{C}^0(X,\wedge^{1,1}T^*_X\otimes End(E))$ such that $\theta^*=\theta$.
If for any $f\in\mathcal{C}^\infty_c(X,\wedge^{n,1}T^*_X\otimes E\otimes A)$ with $\bar\partial f=0$,
and any positive Hermitian line bundle $(A,h_A)$ on $X$ with $i\Theta_{A,h_A}\otimes Id_E+\theta>0$ on $\text{supp}f$,
there is $u\in L^2(X,\wedge^{n,0}T_X^*\otimes E\otimes A)$, satisfying $\bar\partial u=f$ and
$$\int_X|u|^2_{h\otimes h_A}dV_\omega\leq \int_X\langle B_{h_A,\theta}^{-1}f,f\rangle_{h\otimes h_A} dV_\omega,$$
provided that the right-hand side is finite,
where $B_{h_A,\theta}=[i\Theta_{A,h_A}\otimes \text{Id}_E+\theta,\Lambda_\omega]$,
then $i\Theta_{E,h}\geq\theta$ in the sense of Nakano.
On the other hand, if in addition $X$ is assumed to have a complete K\"ahler metric,
the above condition is also necessary for that $i\Theta_{E,h}\geq\theta$ in the sense of Nakano.
In particular, if $(E,h)$ satisfies the optimal $L^2$-estimate condition, then $(E,h)$ is Nakano semi-positive.
\end{thm}

\begin{rem}\label{rem:reduce to trivial bundle}
As remark 1.2 in \cite{DNWZ20} said, if $X$ admits a strictly plurisubharmonic function, we can take $A$ to be the trivial bundle (with nontrivial metrics).
\end{rem}

We aim to use this theorem to study Nakano positivity in the singular setting. In general, the direct image sheaf was not the smooth vector bundle and therefore we can not use Theorem \ref{thm:theta-nakano text_intr}, but if the singular metric $h_L$ of the line bundle $L$ satisfies the Assumption \ref{Z1}, then the direct image sheaf has a smooth $L^2$ canonical metric. The following lemmas are crucial for solving the $\dbar$-equation with $L^2$ estimate, which is essential for establishing the desired positivity results in this setting.

\begin{lem} \cite[Lemma 3.2]{Dem82} \cite[Appendix]{DNWZ20} \label{operator}
Let $X$ be a complex manifold with dimension $n$, assume that $\theta \in \wedge^{1,1}T_X^*$ be a positive $(1,1)$-form, and fix an integer $q\geq 1$.
\begin{enumerate}
\item  for each form $u \in \wedge^{n,q}T_X^*, \langle [\theta, \Lambda_{\omega}]^{-1} u, u \rangle dV_{\omega}$ is non-increasing with respect to $\theta$ and $\omega$;
\item  for each form $u \in \wedge^{n,1}T_X^*, \langle [\theta, \Lambda_{\omega}]^{-1} u, u \rangle dV_{\omega}$ is independent with respect to $\omega$.
\end{enumerate}
\end{lem}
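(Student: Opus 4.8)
The plan is to reduce both assertions to pointwise statements of linear algebra on the fibre $\wedge^{n,q}T_x^*$ at a fixed point $x\in X$, since $\theta$, $\omega$, the operator $[\theta,\Lambda_\omega]$, the norm $\langle\cdot,\cdot\rangle_\omega$ and $dV_\omega$ are all evaluated pointwise. First I would simultaneously diagonalize the pair $(\theta,\omega)$: choose a $(1,0)$-coframe $(\zeta_j)_{1\le j\le n}$ with $\omega = i\sum_j \zeta_j\wedge\bar\zeta_j$ and $\theta = i\sum_j\gamma_j\,\zeta_j\wedge\bar\zeta_j$, where $\gamma_j>0$ because $\theta>0$. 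In this frame the Nakano operator acts diagonally on the basis $\{\zeta_{[n]}\wedge\bar\zeta_K\}_{|K|=q}$ of $(n,q)$-forms, with eigenvalue $\gamma_K:=\sum_{k\in K}\gamma_k$; indeed the general eigenvalue $\sum_{j\in J}\gamma_j+\sum_{k\in K}\gamma_k-\sum_j\gamma_j$ collapses to $\gamma_K$ since $J=\{1,\dots,n\}$. As $q\ge 1$ and all $\gamma_j>0$, every eigenvalue is strictly positive, so $[\theta,\Lambda_\omega]$ is positive definite and invertible, and for $u=\sum_K u_K\,\zeta_{[n]}\wedge\bar\zeta_K$ one has $\langle[\theta,\Lambda_\omega]^{-1}u,u\rangle_\omega=\sum_K\gamma_K^{-1}|u_K|^2$.

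For the monotonicity in $\theta$ (with $\omega$ held fixed) I would argue abstractly. The assignment $\theta\mapsto[\theta,\Lambda_\omega]$ is $\mathbb R$-linear and, by the eigenvalue computation above, sends a positive $(1,1)$-form to a positive semidefinite endomorphism of $\wedge^{n,q}T_x^*$ for every $q\ge1$. Hence $\theta'\ge\theta>0$ forces $[\theta',\Lambda_\omega]\ge[\theta,\Lambda_\omega]>0$, and since $X\mapsto X^{-1}$ is operator-antitone on positive definite Hermitian operators we get $[\theta',\Lambda_\omega]^{-1}\le[\theta,\Lambda_\omega]^{-1}$, i.e. $\langle[\theta',\Lambda_\omega]^{-1}u,u\rangle_\omega\le\langle[\theta,\Lambda_\omega]^{-1}u,u\rangle_\omega$. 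Multiplying by the fixed factor $dV_\omega$ gives the claimed $\theta$-monotonicity, for all $q$.

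For the independence in $\omega$ asserted in (2) I would exhibit an \emph{intrinsic} formula, valid for $(n,1)$-forms, that does not mention $\omega$ at all. Writing $\theta=i\sum_{p,q}\theta_{p\bar q}\,dw_p\wedge d\bar w_q$ and $f=\sum_k f_{\bar k}\,dw_{[n]}\wedge d\bar w_k$ in arbitrary local coordinates, I claim
\begin{equation*}
\langle[\theta,\Lambda_\omega]^{-1}f,f\rangle_\omega\,dV_\omega = c_n\sum_{k,l}\theta^{\bar l k}f_{\bar k}\overline{f_{\bar l}}\;dw_{[n]}\wedge\overline{dw_{[n]}},
\end{equation*}
where $(\theta^{\bar l k})$ is the inverse matrix of $(\theta_{k\bar l})$ and $c_n$ is a universal constant. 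This is checked in the diagonalizing frame of the first paragraph, where $\theta^{\bar l k}=\gamma_k^{-1}\delta_{kl}$ and the right-hand side reduces to $c_n\big(\sum_k\gamma_k^{-1}|f_{\bar k}|^2\big)\,\zeta_{[n]}\wedge\overline{\zeta_{[n]}}$, matching $\langle[\theta,\Lambda_\omega]^{-1}f,f\rangle_\omega\,dV_\omega$; the usual transformation law then propagates the identity to all coordinates. Since the right-hand side involves only $\theta$ and $f$, statement (2) follows at once, and it also re-proves the $q=1$ case of $\theta$-monotonicity via antitonicity of the matrix inverse.

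The hard part will be the monotonicity in $\omega$ for general $q$ in (1), because now the operator, the norm \emph{and} the volume all vary, and one cannot diagonalize the three forms $\theta,\omega,\omega'$ at once. The plan is to first diagonalize only the two metrics, choosing $(\zeta_j)$ with $\omega=i\sum_j\zeta_j\wedge\bar\zeta_j$ and $\omega'=i\sum_j\lambda_j\,\zeta_j\wedge\bar\zeta_j$, $\lambda_j\ge1$, and then to factor the passage from $\omega$ to $\omega'$ into a finite chain of elementary moves that rescale the metric in a single holomorphic direction $\zeta_j$ at a time. For one such move one is left with a one-parameter family $\omega_s$ and must show $\frac{d}{ds}\big(\langle[\theta,\Lambda_{\omega_s}]^{-1}u,u\rangle_{\omega_s}dV_{\omega_s}\big)\le0$; a useful sanity check is the conformal case $\omega_s=s\,\omega$, for which the scaling relations $\Lambda_{s\omega}=s^{-1}\Lambda_\omega$, $|u|^2_{s\omega}=s^{-(n+q)}|u|^2_\omega$ and $dV_{s\omega}=s^n\,dV_\omega$ combine to the clean homogeneity $Q(\theta,s\omega,u)=s^{1-q}Q(\theta,\omega,u)$, which is non-increasing in $s\ge1$ for all $q\ge1$ (strictly decreasing for $q\ge2$, constant for $q=1$, consistent with (2)). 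Carrying out the single-direction computation in general is the genuine technical core; alternatively, since the statement is precisely \cite[Lemma 3.2]{Dem82}, one may simply invoke Demailly's diagonalization argument as reproduced in \cite[Appendix]{DNWZ20}.
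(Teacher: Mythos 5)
First, a point of reference: the paper never proves this lemma --- it is imported wholesale from \cite[Lemma 3.2]{Dem82} and \cite[Appendix]{DNWZ20} --- so there is no internal argument to compare yours against; I can only judge your proposal on its own terms. Most of it is sound. The pointwise reduction, the eigenvalue computation $\gamma_K=\sum_{k\in K}\gamma_k$ for $(n,q)$-forms, the $\theta$-monotonicity via $\mathbb{R}$-linearity of $\theta\mapsto[\theta,\Lambda_\omega]$ together with operator-antitonicity of the inverse, and the intrinsic formula $\langle[\theta,\Lambda_\omega]^{-1}f,f\rangle_\omega\,dV_\omega=c_n\sum_{k,l}\theta^{\bar l k}f_{\bar k}\overline{f_{\bar l}}\,dw_{[n]}\wedge\overline{dw_{[n]}}$ for $(n,1)$-forms (verified in a frame diagonalizing the pair $(\theta,\omega)$ and propagated by tensoriality of both sides) are all correct. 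It is worth noting that these two pieces --- the $\theta$-monotonicity and statement (2) --- are exactly what the paper actually uses (in the proofs of Theorem \ref{equ} and Theorem \ref{thm: direct image-optimal L2 estimate}); the $\omega$-monotonicity of (1) is never invoked.

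The genuine gap is precisely that last piece: the $\omega$-monotonicity in (1) for general $q\ge1$ is not proven in your text. Your reduction plan --- diagonalize $\omega$ and $\omega'$ simultaneously, factor $\omega\le\omega'$ into a chain of single-direction rescalings, and show the quantity decreases along each elementary move --- is a legitimate chaining strategy, but the single-move inequality is exactly where the difficulty lives: in the frame diagonalizing $\omega$ and $\omega'$, the form $\theta$ is in general \emph{not} diagonal, so along $\omega_s$ the operator $[\theta,\Lambda_{\omega_s}]$, its inverse, the pointwise norm on $(n,q)$-forms and $dV_{\omega_s}$ all vary at once, and none of your computations touch this case. The conformal check $\omega_s=s\omega$ sidesteps the problem entirely, since scalar rescaling commutes with everything; it confirms homogeneity $s^{1-q}$ but proves nothing about non-conformal deformations. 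Ending with ``alternatively, one may simply invoke Demailly's diagonalization argument'' concedes that this half of (1) rests on the citation rather than on your argument --- which is defensible in the context of this paper (which does exactly the same), but it means your proposal, read as a self-contained proof of the statement, is incomplete at its hardest point. To close it you would need either to carry out the derivative computation $\frac{d}{ds}\bigl(\langle[\theta,\Lambda_{\omega_s}]^{-1}u,u\rangle_{\omega_s}dV_{\omega_s}\bigr)\le 0$ with non-diagonal $\theta$, using $\frac{d}{ds}B_s^{-1}=-B_s^{-1}\dot B_s B_s^{-1}$ and tracking the variation of the metric and volume, or to reproduce the diagonalization argument of \cite{Dem82} as written up in \cite{DNWZ20}.
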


We need the Richberg-type global regularization result for unbounded quasi-plurisubharmonic functions. Recall an upper semi-continuous function $\phi: X \rightarrow [-\infty, +\infty)$ on a complex manifold $X$ is quasi-psh if it is locally of the form $\phi = u + f$ where $u$ is plurisubharmonic(psh) function and $f$ is a smooth function.

\begin{lem} \cite[Theorem 3.8]{Bou17} \label{rich}
Let $\phi$ be a quasi-psh function on a complex $X$, and assume given finitely many closed, real $(1,1)$-forms $\theta_{\alpha}$ such that $\theta_{\alpha} + i \ddbar \phi \geq 0$ for all $\alpha$. Suppose either that $X$ is strongly pseudoconvex, or that $\theta_{\alpha}>0$ for all $\alpha$. Then we can find a sequence $\phi_j \in \mathcal{C}^{\infty}(X)$ with the following properties:
\begin{enumerate}
\item  $\phi_j$ converges point-wise to $\phi$;
\item  for each relatively compact open subset $U \Subset X$, there exists $j_U\gg 1$ such that the sequence $(\phi_j)$ becomes decreasing with $\theta_{\alpha} + i \ddbar \phi_j >0$ for each $\alpha$ when $j\geq j_U$.
\end{enumerate}
\end{lem}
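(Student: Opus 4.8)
The plan is to produce the sequence $\phi_j$ by the classical Richberg--Demailly regularization scheme: smooth $\phi$ locally by convolution, control the resulting curvature error, restore strict positivity, and glue the local pieces by a regularized maximum. The starting observation is that closedness of the $\theta_\alpha$ lets me work entirely with plurisubharmonic functions. On a small coordinate chart $\Omega\subset X$ the $\ddbar$-Poincar\'e lemma gives smooth potentials $g_\alpha$ with $\theta_\alpha=i\ddbar g_\alpha$ on $\Omega$, so the hypothesis $\theta_\alpha+i\ddbar\phi\ge 0$ becomes the statement that each $\phi+g_\alpha$ is psh on $\Omega$. Thus the regularization of $\phi$ is reduced to the simultaneous regularization of the finitely many psh functions $\phi+g_\alpha$.

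Next I would convolve. Fixing a radial mollifier $\rho_\ep$ and working in the chart, set $\phi_{\Omega,\ep}:=\phi*\rho_\ep$. Since each $\phi+g_\alpha$ is psh, its convolution is smooth and decreases to $\phi+g_\alpha$ as $\ep\downarrow 0$, while $i\ddbar\phi_{\Omega,\ep}=(i\ddbar\phi)*\rho_\ep$; writing $\theta_\alpha+i\ddbar\phi_{\Omega,\ep}=(\theta_\alpha+i\ddbar\phi)*\rho_\ep+(\theta_\alpha-\theta_\alpha*\rho_\ep)$, the first term is $\ge 0$ and the second is $O(\ep)$ because $\theta_\alpha$ is smooth, giving the uniform curvature estimate
\begin{equation*}
\theta_\alpha+i\ddbar\phi_{\Omega,\ep}\ge -C_\Omega\,\ep\,\omega \qquad\text{on }\Omega'\Subset\Omega
\end{equation*}
for every $\alpha$. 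Strict positivity is then restored in the two stated cases. If every $\theta_\alpha>0$, then locally $\theta_\alpha\ge c\,\omega$ on a relatively compact set, and the error $-C_\Omega\ep\omega$ is absorbed once $\ep$ is small enough. If instead $X$ is strongly pseudoconvex, I take a smooth strictly psh exhaustion $\chi$ and replace $\phi_{\Omega,\ep}$ by $\phi_{\Omega,\ep}+\delta(\ep)\chi$ with $\ep\ll\delta(\ep)\to 0$; then $\theta_\alpha+i\ddbar(\phi_{\Omega,\ep}+\delta\chi)\ge -C_\Omega\ep\omega+\delta\,i\ddbar\chi>0$, while the added term tends to $0$ so that pointwise convergence to $\phi$ is preserved.

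The decisive step is gluing the chart-wise regularizations into a single global smooth function without destroying the simultaneous positivity $\theta_\alpha+i\ddbar(\cdot)>0$. Here I would use Demailly's regularized maximum $\max_\eta$, which is smooth, non-decreasing and convex in each variable, translation invariant, and coincides with $\max$ where the arguments are well separated. Translation invariance is exactly what makes the class $\{\psi:\ \theta_\alpha+i\ddbar\psi>0\}$ stable under $\max_\eta$: for each fixed $\alpha$, writing locally $\theta_\alpha=i\ddbar g_\alpha$ and subtracting $g_\alpha$ from every argument reduces the claim to the elementary fact that $\max_\eta$ of psh functions is psh. Choosing a locally finite cover and patching the (curvature-corrected) local regularizations with $\max_\eta$ then yields a global smooth $\phi_\ep$ that on each patch agrees with one of the local pieces and satisfies $\theta_\alpha+i\ddbar\phi_\ep>0$ for every $\alpha$.

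Finally, taking $\ep=\ep_j\downarrow 0$ and exploiting that each local convolution decreases in $\ep$ modulo the fixed smooth potentials, a diagonal argument over an exhaustion of $X$ arranges $\phi_j:=\phi_{\ep_j}$ to converge pointwise to $\phi$ and to become decreasing on every $U\Subset X$, which is property (2). I expect the main obstacle to be precisely this combination: convolutions computed in different charts disagree on overlaps, and one must verify that the regularized maximum both smooths the transitions and preserves positivity for all the $\theta_\alpha$ at once, while remaining compatible with the monotone convergence demanded in (2). Reconciling the additive correction $\delta(\ep)\chi$ of the strongly pseudoconvex case with this monotonicity is the most delicate bookkeeping, since $\chi$ is unbounded and the balance $\ep\ll\delta(\ep)\to 0$ must be maintained uniformly over the patches of the exhaustion.
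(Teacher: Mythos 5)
First, a point of reference: the paper itself gives no proof of this lemma --- it is quoted as \cite[Theorem 3.8]{Bou17}, so your attempt can only be compared with the proof in that source, which follows Richberg's regularization scheme as refined by B{\l}ocki--Ko{\l}odziej to handle quasi-psh functions that are not continuous. Your overall architecture is the right one and coincides with that proof strategy: local potentials $g_\alpha$ turning the hypothesis into plurisubharmonicity of $\phi+g_\alpha$, convolution with the curvature error estimate $\theta_\alpha+i\ddbar(\phi*\rho_\ep)\ge -C_\Omega\,\ep\,\omega$, restoration of strictness either by absorption into $\theta_\alpha\ge c\,\omega$ or by adding $\delta(\ep)\chi$ for a strictly psh exhaustion $\chi$, and gluing by the regularized maximum, whose translation invariance indeed makes the class $\{\psi:\ \theta_\alpha+i\ddbar\psi>0\}$ stable simultaneously for all $\alpha$.

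However, the step you flag as ``the main obstacle'' is a genuine gap, and it is the mathematical heart of the statement rather than bookkeeping. Richberg gluing produces a \emph{smooth} function only if, near the boundary of each chart $B_i$, the $i$-th local regularization is strictly dominated by the maximum of the neighboring ones, so that the regularized maximum does not see the edge of the $i$-th piece's domain. For \emph{continuous} $\phi$ this is arranged by subtracting small cutoff corrections, because all local regularizations are uniformly close to $\phi$; here $\phi$ is merely upper semicontinuous and may take the value $-\infty$, so the convolutions converge only monotonically, not uniformly, and no constant-size correction can force the required domination. This is exactly where the scheme, as you wrote it (one common parameter $\ep$ for all charts), breaks down. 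The repair used in the cited source is a Dini--Hartogs type compactness argument: the $i$-th regularization (corrected by the smooth potentials) is continuous and $\ge\phi$, while the $k$-th regularizations decrease to $\phi$ as their parameter shrinks, so on any compact piece of the overlap one can make the $k$-th parameter so small that the strict inequality holds; the smoothing parameters are then chosen \emph{inductively} over the finite cover, and the same comparison applied to consecutive members of the sequence is what yields the eventual monotonicity in (2), after absorbing the smooth-potential errors by summable constants. Without this ingredient neither the smoothness of your glued $\phi_\ep$ nor property (2) is established, so the proposal as written is a correct plan but not yet a proof.
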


The following $L^2$-estimate for the $\dbar$ equation is fundamental in complex geometry.

\begin{lem}\cite[Theorem 4.5]{bookJP} \label{thm: L2 estimate Nakano}
Let $(X,\omega)$ be a complete K\"ahler manifold, with a K\"ahler metric which is not necessarily complete.  Let $(E,h)$ be a Hermitian  vector bundle of rank $r$ over $X$, and assume that the curvature operator $B:=[i\Theta_{E,h},\Lambda_\omega]$ is semi-positive definite everywhere on $\wedge^{n,q}T_X^*\otimes E$, for some $q\geq 1$. Then for any form $g\in L^2(X,\wedge^{n,q}T^*_{X}\otimes E)$ satisfying $\bar{\partial}g=0$ and $\int_X\langle B^{-1}g,g\rangle dV_\omega<+\infty$, there exists $f\in L^2(X,\wedge^{n,q-1}T^*_X\otimes E)$ such that $\bar{\partial}f=g$ and $$\int_X|f|^2dV_\omega\leq \int_X\langle B^{-1}g,g\rangle dV_\omega.$$
\end{lem}

\begin{thm} \label{equ}
Let $p:\cX\to D$ be a holomorphic proper fibration from a $(n+m)$-dimensional K\"ahler manifold $\cX$ onto the bounded pseudoconvex domain $D\subset \mathbb C^{m}$, and let $(L,h_L)$ be a holomorphic line bundle endowed with a possibly singular hermitian metric $h_L$ with local weight $\psi$ and curvature current $ i \Theta_{h_L}(L)\ge 0$. We assume that the K\"ahler manifold $\cX$ contains a Stein Zariski open subset and $\phi$ be any smooth strictly plurisubharmonic function on $D$. If
$$ v \in L^2_{loc}(\cX, \wedge^{n,1} T^{\ast}_{\cX} \otimes L)
$$
satisfying $\dbar v=0$ and
$$ \int_{\cX} \langle [i \ddbar p^{\ast}\phi, \Lambda_{\omega}]^{-1} v, v \rangle_{\psi} e^{-p^{\ast}\phi} dV_{\omega} < \infty.
$$
Then $v= \dbar u$ for some $u\in L^2(\cX, \wedge^{n,0} T^{\ast}_{\cX} \otimes L)$ such that
\begin{equation} \label{eq-optimal}
\int_{\cX} |u|^2_{\psi} e^{-p^{\ast}\phi} dV_{\omega} \leq \int_{\cX} \langle [i \ddbar p^{\ast}\phi, \Lambda_{\omega}]^{-1} v, v \rangle_{\psi} e^{-p^{\ast}\phi} dV_{\omega}.
\end{equation}
Here the subscript $|\cdot|^2_{\psi}, \langle \cdot \rangle_{\psi}$ means the inner product with respect to metric weight $\psi$ of $L$.
\end{thm}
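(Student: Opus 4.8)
The plan is to solve the $\dbar$-equation not on $\cX$ itself---which is only assumed K\"ahler and need not be Stein---but on the Stein Zariski open subset $\cX^\circ := \cX\setminus Z$ guaranteed by hypothesis, and then to extend the resulting solution across the analytic set $Z$. Since $v$ and the prospective solution are $L^2$ and $Z$ has measure zero, this extension is the standard $L^2$-removability of $\dbar$ across analytic subsets, so the essential work takes place on $\cX^\circ$. Throughout, the delicate point to watch is that the constant in \eqref{eq-optimal} must be exactly $1$; this will be preserved only because of Lemma~\ref{operator}, whose part (1) gives monotonicity of $\langle[\theta,\Lambda_\omega]^{-1}v,v\rangle$ in $\theta$ and whose part (2) gives its independence of $\omega$ on $(n,1)$-forms, together with the fact that for $(n,0)$-forms the quantity $|u|^2\,dV$ is itself metric-independent.

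First I would regularize the singular weight. Fixing a smooth reference weight $\psi_0$ for $L$, the difference $\rho := \psi-\psi_0$ is a global quasi-psh function with $i\ddbar\rho + i\Theta_{h_0}(L) = i\Theta_{h_L}(L)\ge 0$. Applying the Richberg-type regularization of Lemma~\ref{rich} on $\cX^\circ$ (Stein, hence strongly pseudoconvex on the sublevel sets of a strictly psh exhaustion) produces smooth functions $\psi_j\downarrow\psi$ such that the smoothed metrics $h_L^{(j)}=e^{-\psi_j}$ satisfy $i\Theta_{h_L^{(j)}}(L)\ge 0$ on the relevant relatively compact subsets. Consequently the total twisted curvature $\Theta_j := i\Theta_{h_L^{(j)}}(L)+i\ddbar p^{\ast}\phi$ is smooth and satisfies $\Theta_j\ge i\ddbar p^{\ast}\phi\ge 0$, which is exactly what is needed to feed Demailly's existence theorem.

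Next I would solve with the sharp $L^2$-estimate on $\cX^\circ$. Equip $\cX^\circ$ with the complete K\"ahler metrics $\h\omega_\ep := \omega+\ep\,\omega_c$, where $\omega_c$ is a complete K\"ahler metric coming from the Stein structure. For the bundle $L$ with weight $\psi_j+p^{\ast}\phi$ and curvature operator $B_{j,\ep}:=[\Theta_j,\Lambda_{\h\omega_\ep}]$, Lemma~\ref{thm: L2 estimate Nakano} yields $u_{j,\ep}$ with $\dbar u_{j,\ep}=v$ and
\begin{equation*}
\int_{\cX^\circ}|u_{j,\ep}|^2\,e^{-\psi_j-p^{\ast}\phi}\,dV_{\h\omega_\ep}\le \int_{\cX^\circ}\langle B_{j,\ep}^{-1}v,v\rangle\,e^{-\psi_j-p^{\ast}\phi}\,dV_{\h\omega_\ep}.
\end{equation*}
Because $\Theta_j\ge i\ddbar p^{\ast}\phi$, part (1) of Lemma~\ref{operator} gives $\langle B_{j,\ep}^{-1}v,v\rangle\,dV_{\h\omega_\ep}\le\langle[i\ddbar p^{\ast}\phi,\Lambda_{\h\omega_\ep}]^{-1}v,v\rangle\,dV_{\h\omega_\ep}$, and part (2) identifies the latter (a $(n,1)$-form expression) with $\langle[i\ddbar p^{\ast}\phi,\Lambda_{\omega}]^{-1}v,v\rangle\,dV_{\omega}$, independently of $\ep$. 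Using $\psi_j\ge\psi$ to replace $e^{-\psi_j}$ by $e^{-\psi}$ on the right, the whole right-hand side is bounded uniformly by the finite quantity $M:=\int_{\cX}\langle[i\ddbar p^{\ast}\phi,\Lambda_\omega]^{-1}v,v\rangle_\psi\,e^{-p^{\ast}\phi}\,dV_\omega$. On the left, since $u_{j,\ep}$ has type $(n,0)$, the $(n,0)$-invariance turns $|u_{j,\ep}|^2\,dV_{\h\omega_\ep}$ into $|u_{j,\ep}|^2\,dV_\omega$, so $\int_{\cX^\circ}|u_{j,\ep}|^2 e^{-\psi_j-p^{\ast}\phi}\,dV_\omega\le M$ uniformly in $\ep,j$.

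Finally I would pass to the limit and extend. The uniform bound gives local $L^2$-bounds (on $U\Subset\cX^\circ$ one has $e^{-\psi_j}\ge e^{-\psi_1}\ge c_U>0$ since $\psi_j$ decreases and $\psi_1$ is smooth), so a diagonal subsequence of $u_{j,\ep}$ converges weakly in $L^2_{\mathrm{loc}}$ to some $u$ with $\dbar u=v$ on $\cX^\circ$. The optimal estimate is then recovered by a two-step lower-semicontinuity argument: for fixed $k$ and $j\ge k$ one has $e^{-\psi_k}\le e^{-\psi_j}$, so weak lower semicontinuity against the fixed smooth weight $e^{-\psi_k-p^{\ast}\phi}$ gives $\int|u|^2 e^{-\psi_k-p^{\ast}\phi}\,dV_\omega\le M$, and then $\psi_k\downarrow\psi$ together with monotone convergence yields $\int_{\cX^\circ}|u|^2_\psi e^{-p^{\ast}\phi}\,dV_\omega\le M$, which is exactly \eqref{eq-optimal}. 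The $L^2$-removability across the analytic set $Z$ then promotes $u$ to a global solution of $\dbar u=v$ on $\cX$. I expect the main obstacle to be orchestrating these three nested approximations---the completeness parameter $\ep$, the regularization index $j$, and the weight limit $k$---so that the constant stays \emph{exactly} $1$; this is precisely why the metric-independence on $(n,1)$-forms and the $(n,0)$-norm invariance of Lemma~\ref{operator} are indispensable, and one must also verify carefully that the Richberg positivity survives on the exhaustion approaching $Z$.
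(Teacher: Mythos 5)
Your overall route is the same as the paper's: regularize the weight by the Richberg-type Lemma~\ref{rich}, solve with the sharp constant via Demailly's theorem (Lemma~\ref{thm: L2 estimate Nakano}) using the curvature monotonicity and metric-independence of Lemma~\ref{operator}, pass to weak limits with monotone convergence in the weight, and recover the solution across the removed analytic set by $L^2$-removability of $\dbar$. Two of your deviations are harmless and even slightly cleaner: you regularize the global quasi-psh function $\psi-\psi_0$ relative to a smooth reference metric instead of (as the paper does) deleting a further hypersurface $H$ inside the Stein Zariski open set so that $L$ becomes trivial and $\psi$ becomes a genuine global psh function; and your auxiliary complete metrics $\h\omega_\ep=\omega+\ep\,\omega_c$ are redundant, since Lemma~\ref{thm: L2 estimate Nakano} as quoted only requires the \emph{existence} of a complete K\"ahler metric on the manifold, not that the metric appearing in the estimate be complete.

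However, there is a genuine gap exactly at the point you flag but do not resolve. Lemma~\ref{rich} does \emph{not} produce smooth weights $\psi_j$ with $i\Theta_{h_L^{(j)}}(L)\ge 0$ (nor with the decreasing property) globally on $\cX^\circ$: its conclusion is that for each relatively compact $U\Subset\cX^\circ$ there is $j_U$ such that positivity and monotonicity hold on $U$ for $j\ge j_U$ only. Consequently your curvature bound $\Theta_j\ge i\ddbar p^{\ast}\phi$ fails to be available on all of $\cX^\circ$, and the hypothesis of Lemma~\ref{thm: L2 estimate Nakano} --- semi-positivity of the curvature operator \emph{everywhere} on the manifold where you solve --- is not met; so the step producing $u_{j,\ep}$ on all of $\cX^\circ$ is unjustified as written. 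This is not a technicality one can wave through: near the boundary of the region where the regularization is controlled, $i\ddbar\psi_j$ can be genuinely negative. The paper's proof avoids this by localizing the solving step: it exhausts the Stein open set by relatively compact weakly pseudoconvex subsets $\Omega_j$ (which still carry complete K\"ahler metrics), applies the regularization to get doubly indexed smooth psh weights $\psi_{j,k}\downarrow\psi|_{\Omega_j}$ on each fixed $\Omega_j$, solves $\dbar u_{j,k}=v$ there with the uniform bound $M$, and then runs your same weak-compactness/monotone-convergence argument in $k$ followed by one more diagonal argument in $j$ before invoking removability. Your proposal needs this extra layer of exhaustion (and the attendant diagonal extraction) to be correct; with it, the argument closes and the constant $1$ survives for exactly the reasons you identify.
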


\begin{proof}
Firstly, we note $h_L e^{-p^{\ast}\phi}$ is also the singular metric of $L$ because $p^{\ast}\phi$ be a globally function on $\cX$. Therefore we have $i \Theta_{h_L}(L) + i \ddbar p^{\ast}\phi \geq i \ddbar p^{\ast}\phi$ in the sense of currents.
To prove the claim, we need an $L^2$-version of the Riemann extension principal. This is to say, if $\alpha\in L^2_{loc}$ be a $L$-valued form on a complex $X$ such that $\dbar \alpha=\beta$ outside a closed analytic subset $A \subset X$, then $\dbar \alpha = \beta$ holds on the whole $X$. On the other hand, if $X$ is a Stein manifold and $L$ be a line bundle on $X$, there exists a hypersurface $H \subset X$ such that $X \setminus H$ is Stein and $L$ is trivial on $X\setminus H$.
Thanks to this, we can assume that $\cX$ is Stein and $L$ is trivial on $\cX$. Then the metric $h_L= e^{-\psi}$ and its local weight $\psi$ is globally defined on $\cX$. Now we can use the global regularization of unbounded quasi-psh functions.

By the Lemma \ref{rich}, we may find an exhaustion of $\cX$ by weakly pseudoconvex open subsets $\Omega_j$ such that $\psi_j = \psi|_{\Omega_j}$ is the decreasing limit of sequence $\psi_{j,k} \in \mathcal C^{\infty}(\Omega_j)$ with
$$ i \ddbar \psi_{j,k} \geq 0  \Longrightarrow  i \ddbar \psi_{j,k} + i \ddbar p^{\ast}\phi \geq i \ddbar p^{\ast}\phi.
$$
Because weakly pseudoconvex manifold admits a complete K\"ahler metric, on $\Omega_j$ we can solve the classical $\dbar$ equation with the $L^2$-estimate as Lemma \ref{thm: L2 estimate Nakano}, i.e., there exist $u_{j,k} \in L^2(\Omega_j, \wedge^{n,0} T^{\ast}_{\Omega_j} \otimes L)$ such that $\dbar u_{j,k} = v$ on $\Omega_j$ and
\begin{align}
\nonumber \int_{\Omega_j} |u_{j,k}|^2_{\psi_{j,k}} e^{-p^{\ast}\phi} dV_{\omega} &=  \int_{\Omega_j} |u_{j,k}|^2 e^{-\psi_{j,k}} e^{-p^{\ast}\phi} dV_{\omega} \\
\nonumber &\leq \int_{\Omega_j} \langle [i \ddbar p^{\ast}\phi, \Lambda_{\omega}]^{-1} v, v \rangle_{\psi_{j,k}} e^{-p^{\ast}\phi} dV_{\omega} \\
\nonumber&\leq \int_{\Omega_j} \langle [i \ddbar p^{\ast}\phi, \Lambda_{\omega}]^{-1} v, v \rangle_{\psi_{j}} e^{-p^{\ast}\phi} dV_{\omega} \\
\nonumber&\leq \int_{\cX} \langle [i \ddbar p^{\ast}\phi, \Lambda_{\omega}]^{-1} v, v \rangle_{\psi} e^{-p^{\ast}\phi} dV_{\omega} \\
\nonumber&= M (constant).
\end{align}
The second inequality because of $\psi_j$ is the decreasing limit of a sequence $\psi_{j,k}$. By monotonicity of $(\psi_{j,k})_k$, we know the integration $\int_{\Omega_j} |u_{j,k}|^2 e^{-\psi_{j,l}} e^{-p^{\ast}\phi} dV_{\omega} \leq M$ for $k\geq l$, this shows in particular that $(u_{j,k})_k$ is bounded in $L^2(\Omega_j, e^{-p^{\ast}\phi-\psi_{j,l}})$. After passing to the subsequence, we thus assume that $u_{j,k}$ converges weakly in $L^2(\Omega_j, e^{-p^{\ast}\phi-\psi_{j,l}})$ to $u_j$, which may further be assumed to be the same for all $l$, by a diagonal argument. Now we have $\dbar u_j = v$, and $\int_{\Omega_j} |u_{j}|^2 e^{-\psi_{j,l}} e^{-p^{\ast}\phi} dV_{\omega} \leq M$ for all $l$, therefore $\int_{\Omega_j} |u_{j}|^2 e^{-\psi} e^{-p^{\ast}\phi} dV_{\omega} \leq M$ by monotone convergence of $\psi_{j,l} \rightarrow \psi$. Once again by a diagonal argument, we may arrange that $u_j \rightarrow u$ weakly in $L^2(K, e^{-\psi})$ for each compact subset $K \subset X$, and finally we are led to the desired conclusion.
\end{proof}

We can now prove Theorem \ref{Thm2} by following the approach of Deng, Ning, Wang, and Zhou.
\begin{thm}\label{thm: direct image-optimal L2 estimate}
Under the set-up of case $Z$, the Hermitian holomorphic vector bundle $(\mathcal F, \|\cdot\|)$ over $D$ defined in \eqref{F}
satisfies the optimal $L^2$-estimate condition.
\end{thm}

\begin{proof}
According to Theorem \ref{thm:theta-nakano text_intr}, it suffices to prove that $(\mathcal F = p_*(K_{\cX/D} + L), \|\cdot\|)$
satisfies the optimal $L^2$-estimate condition with the standard K\"ahler metric $\omega_0$ on $D \subset\mc^n$. Let $\omega$ be an arbitrary K\"ahler metric on $\cX$.

Let $f$ be a $\bar\partial$-closed compact supported smooth $(m,1)$-form with values in $\mathcal F$, and let $\phi$ be any smooth strictly plurisubharmonic function on $D$. We can write $f(t)=dt\wedge(f_1(t)d\bar t_1+\cdots +f_n(t)d\bar t_n)$, with $f_i(t)\in \mathcal F_t=H^0(X_t, K_{X_t}\otimes L)$. One can identify  $f$ as a smooth compact supported $(n+m,1)$-form $\tilde f(t,z):=dt\wedge (f_1(t,z)d\bar t_1+\cdots+f_n(t,z)d\bar t_n)$ on $\cX$,
with $f_i(t,z)$ being holomorphic section of $K_{X_t}\otimes L|_{X_t}$.
We have two observations as follows, the first is that $\dbar_z f_i(t,z)=0$ for any fixed $t\in D$, since  $f_i(t,z)$ are holomorphic sections $K_{X_t}\otimes L|_{X_t}$. The second is that $\dbar_tf=0$, since $f$ is a $\bar\partial$-closed form on $D$.
It follows that $\tilde f$ is a $\bar\partial$-closed compact supported  $(n+m,1)$-form on $\cX$ with values in $L$.
We want to solve the equation $\bar\partial\tilde u=\tilde f$ on $X$ by using Theorem \ref{equ}.
Now we equipped $L$ with the metric $\tilde h:=he^{-\pi^*\phi}$,
then $i\Theta_{L,\tilde h}=i\Theta_{L,h}+i\ddbar\pi^*\phi$, which is also  semi-positive in the sense of currents. Hence there is  $\tilde u\in \wedge^{m+n,0}T^{\ast}_{\cX}\otimes L$, such that $\bar\partial\tilde u=\tilde f$, and satisfies the following estimate
\begin{align}\label{eqn: optimal L2 estimate 1}
\int_{\cX} c_{m+n}\tilde u\wedge \bar{\tilde u}e^{-p^*\phi} =& \int_{\cX} |\tilde u|^2 e^{-p^*\phi}dV_{\omega}  \notag\\
\leq &\int_{\cX}\langle [i\partial\bar\partial p^*\phi, \Lambda_{\omega}]^{-1}\tilde f,\tilde f\rangle e^{-p^*\phi}dV_\omega \notag\\
=& \int_{\cX}\langle [i\partial\bar\partial p^*\phi, \Lambda_{\omega'}]^{-1}\tilde f,\tilde f\rangle e^{-p^*\phi}dV_{\omega'} \notag\\
=&\int_D \langle [i\partial\bar\partial \phi, \Lambda_{\omega_0}]^{-1}f,f\rangle_te^{-\psi}dV_{\omega_0}.
\end{align}
The first inequality due to \eqref{eq-optimal}, the second equality holds because $\tilde f$ is $(n+m,1)$-form, and therefore $\langle [i\partial\bar\partial p^*\phi, \Lambda_{\omega}]^{-1}\tilde f,\tilde f\rangle dV_\omega$ are independent to $\omega$ in view of Lemma \ref{operator}. The last equality is valid because here we choose $\omega'= i \Sigma_{j=1}^m dt_j \wedge d\bar t_j + i \Sigma_{j=1}^n dz_j \wedge d\bar z_j$. The notation $\langle\cdot,\cdot\rangle_t$ here means a pointwise inner product concerning the Hermitian metric $\|\cdot\|$ on $\mathcal F$.

Set $\tilde u_t:=\tilde u(t,\cdot)$, we observe that $\dbar \tilde u_t=0$ for any fixed $t\in D$, since $\bar\partial\tilde u=\tilde f$ and the $(n+m,1)$-form $\tilde f$ contains only the terms of $d\bar t_i$.
This means that $\tilde u_t \in \mathcal F_t$, and hence we may view $\tilde u$ as a section $u$ of $\mathcal F$.
It is obviously that $\bar\partial u=f$.
Due to Fubini's theorem, we have that
\begin{align}\label{eqn: optimal L2 estimate 2}
\int_{\cX} c_{m+n}\tilde u\wedge \bar{\tilde u}e^{-p^*\phi} = \int_D \|u_t\|^2e^{-\phi}dV_{\omega_0}.
\end{align}
Combining \eqref{eqn: optimal L2 estimate 1} with \eqref{eqn: optimal L2 estimate 2}, we obtain
\begin{align*}
\int_D \|u_t\|^2e^{-\phi} dV_{\omega_0} \leq \int_D \langle [i\partial\bar\partial \phi, \Lambda_{\omega_0}]^{-1}f,f\rangle_t e^{-\phi}dV_{\omega_0}.
\end{align*}
So we have proved that $\mathcal F$ satisfies the optimal $L^2$-estimate condition, thus owing to Theorem \ref{thm:theta-nakano text_intr} (and Remark \ref{rem:reduce to trivial bundle}),  $\mathcal F$ is semi-positive in the sense of Nakano.
\end{proof}

\section{Nakano positivity via curvature computation} \label{section-3}

In this section, we will investigate the Nakano positivity of direct image sheaves under the K\"ahler condition alone, thereby eliminating the need for Assumption \ref{Z4}. We adopt Berndtsson's approach from \cite{Bo09}.

\subsection{Set-up of case $V$} \label{setup}

Let $p:\cX\to D$ be a holomorphic proper fibration from a $(n+m)$-dimensional K\"ahler manifold $\cX$ onto the polydisk $D\subset \mathbb C^{m}$, and let $(L,h_L)$ be a holomorphic line bundle endowed with a possibly singular hermitian metric $h_L$. We assume that there exists a divisor $E= E_1+\dots + E_N$  whose support is contained in the total space $\cX$ of $p$
such that the following requirements are fulfilled.
\begin{enumerate} [label={\color{violet}(V.\arabic*)}]

\item \label{V1} The divisor $E$ intersect each fiber transversally, i.e., for every $t\in D$ the restriction divisor $E_t:= E|_{X_t}$ of $E$ on each fiber $X_t$ has simple normal crossings. Let $\Omega\subset \cX$ be a coordinate subset on $\cX$. We take $(z_1,\dots z_n, t _1,\dots, t_m)$ a coordinate system on $\Omega$ such that the last $m$ variables $t_1,\dots, t_m$ corresponds to the map $p$ itself and such that $z_1\dots z_r= 0$ is the local equation of $E\cap \Omega$.

\item \label{V2} The metric $h_L$ has \textbf{klt type singularities} along $E$, i.e., its local weights $\psi_L$ on $\Omega$ can be written as
\begin{equation*}
\psi_L \equiv  \sum_{i\in I} a_i\log |z_i|^2
\end{equation*}
modulo $\mathcal C^\infty$ functions, where $a_i$ are real numbers satisfying that $a_i < 1$ for all $i$. The set of indexes in the sum coincides with the non-empty subsets of $\{1,\dots, r\}$.

\item \label{V3} The Chern curvature of $(L,h_L)$ satisfies
\[i\Theta_{h_L}(L)\ge 0\]
in the sense of currents on $\cX$.
\end{enumerate}

\medskip

\noindent One wants to study some interesting objects such as adjoint bundles $K_{\cX/D}+L$
or $(K_{\cX/D}+L)\otimes \mathcal I(h_L)$ and their direct image sheaves.
Under the setting of case $V$, we set
\begin{align} \label{G}
\mathcal G:=p_*((K_{\cX/D}+L)\otimes \mathcal I(h_L)) = p_*(K_{\cX/D}+L),
\end{align}
the last equality holds because $\mathcal I(h_L) = \mathcal{O}_{\cX}$ which is due to the conditions in Assumption \ref{V2}. We first remark that $\cG$ is indeed a vector bundle by the K\"ahler version of Ohsawa--Takegoshi theorem \cite{CaoOT} under the assumption \ref{V3}, which implies that any element of $H^0\left(X_t, K_{X_t}+ L\right)$ extends to $\cX$. One obtains
\begin{equation*}
\cG_t= H^0\left(X_t, K_{X_t}+ L \right)
\end{equation*}
for every $t\in D$. Therefore there exists an induced canonical $L^2$ metric on $\cG$. We will interchangeably denote by $\|\cdot \|$ or $h_\cG$ the $L^2$ metric on $\mathcal G$. If $u$ is the section of $\mathcal G$, $u_t:= u|_{X_t} \in \mathcal G_t=H^0(X_t, K_{X_t}+L)$, then
$$\|u_t\|^2:=c_n \int_{X_t}u\wedge \bar u e^{-\psi_{L}} ~~\text{with} ~~ c_n=(\sqrt{-1})^{n^2}.$$
By assumption, we claim that the $L^2$ metric is smooth on $D$. Indeed, we can use partitions of unity to reduce to checking that integrals of the form $\int_{\Omega\cap X_t} |u_t|^2e^{-\psi_L}$ vary smoothly with $t$, where $\psi_L$ is given by the expression in  \ref{V2}. Now it is clear there that all derivatives in the $t,\bar t$ variables of $\psi_L$ are bounded and smooth so that the result follows from general smoothness results for integrals depending on a parameter. We denote by $\nabla$ the Chern connection of $(\mathcal G, \|\cdot \|)$ on $D$, and $\nabla^{1,0}, \nabla^{0,1}$ represent the $(1,0)$ part and $(0,1)$ part of the Chern connection respectively.

We set $\cX^\circ:=\cX\setminus E$, $X_t^\circ:=X_t \cap \cX^\circ$ be a open fiber, $L_t:=L|_{X_t}$, $h_{L_t}:=h_{L}|_{X_t}$. When working on a trivializing coordinate chart of $L$, we will always denote by $\psi_L$ the local weight of $h_L$. Under assumption \ref{V1}, we will write  $E :=\sum_{i=1}^N  E_i$ for the decomposition of $E$ into its (smooth) irreducible components.  Next, let $s_i$ be a section of $\mathcal O_\cX(E_i)$ that cuts out $E_i$, and let $h_i$ be a smooth hermitian metric on $\mathcal O_\cX(E_i)$.
In the following, $|s_i|^2$ stands for $|s_i|^2_{h_i}$, and we assume that $|s_i|^2<e^{-1}$. Let $\omega$ be a fixed K\"ahler metric on $\cX$, and let
\begin{equation} \label{pmetric}
\omega_E  := C\omega + dd^c \Big[ - \sum_{i=1}^N \log \log \frac{1}{|s_i|^2} \Big]  \qquad\text{on } \cX^\circ
\end{equation}
be a metric with Poincar\'e singularities along $E$, the constant $C$ here is large enough to make $\omega_E$ positive. Thanks to \ref{V1} we infer that $\displaystyle \omega_E |_{X_t^\circ}$ is a complete K\"ahler metric on $X^\circ_t$ with Poincar\'e singularities along $E\cap X_t$ for each $t\in D$.

On each compact K\"ahler fiber with dimension $n$, the section $u$ is a smooth $(n,0)$-form, we have the next equality which is very important for our case,

 \begin{equation} \label{integ}
 \|u_t\|^2= c_n \int_{X_t}u\wedge \bar{u} e^{-\psi_L} = c_n \int_{X_t^\circ}u\wedge \bar{u} e^{-\psi_L} = c_n \int_{X_t^\circ} |u|_{\omega_E}^2 e^{-\psi_L} dV_{\omega_E}.
\end{equation}

\noindent Here $\omega_E$ is the metric in \eqref{pmetric}. The second equality since the divisor has measure zero, and the third equality holds thanks to the integrand is independent to $\omega_E$. Due to \eqref{integ}, we can use the $L^2$-Hodge theory on $X_t^{\circ}$ with respect to metric $\omega_E$ and $\psi_L$.

\begin{thm} \label{Thm1}
Under the set-up of case $V$, the Hermitian holomorphic vector bundle $(\mathcal G, \|\cdot\|)$ over $D$ defined in \eqref{G} is semi-positive in the sense of Nakano.
\end{thm}

\subsection{the Chern connection}
As in the set-up of case $V$, Let $u$ be the section of direct image sheaf $\mathcal{G}$, it is an $L$-valued $(n,0)$ form on $\cX$. By the canonical $L^2$ metric we have $\|u_t\|^2= c_n \int_{X_t}u\wedge \bar{u} e^{-\psi_L}$.

One well-known fact is that for any complex manifold $X$ with dimension $n$, and two positive $(1,1)$-forms $\omega$ and $\widetilde{\omega}$ on $X$ with relation $\omega \leq \widetilde{\omega}$. Let $\alpha$ be a $(n,q)$-form on $X$, then $ |\alpha|_{\omega}^2 dV_{\omega} \geq |\alpha|_{\widetilde{\omega}}^2 dV_{\widetilde{\omega}}$. In particular, if $\alpha$ is a $(n,0)$-form, then we have $ |\alpha|_{\omega}^2 dV_{\omega} = |\alpha|_{\widetilde{\omega}}^2 dV_{\widetilde{\omega}}$.

Therefore we have the next equality which is very important for our case,
\begin{equation}
 \|u_t\|^2= c_n \int_{X_t}u\wedge \bar{u} e^{-\psi_L} = c_n \int_{X_t^\circ}u\wedge \bar{u} e^{-\psi_L} = c_n \int_{X_t^\circ} |u|_{\omega_E}^2 e^{-\psi_L} dV_{\omega_E}.
\end{equation}
\noindent Here $\omega_E$ is the metric with Poincar\'e singularities along $E$ as in \eqref{pmetric}. Due to this, we can use the Hodge decomposition on $X_t^{\circ}$ with respect to metric $\omega_E$ and $\psi_L$. For any two section $u,v$ of $\mathcal G$, put $[u, v] = c_n u\wedge \bar{v} e^{-\psi_L}$, then $ \|u_t\|^2 = \int_{X_t}[u, u] = p_{\ast}[u_t,u_t]$, and consequently we define the inner product on $\mathcal G$ by
$$(u,v)= p_{\ast}[u,v]= p_{\ast}(c_n u\wedge \bar{u} e^{-\psi_L}). $$
Let $\cX$ be a K\"ahler manifold of dimension $m+n$ which is smoothly fibered over connected complex $m$-dimensional polydisk $D$, the local coordinates on $\cX$ are denote by $(z_1, \cdot\cdot\cdot, z_n, t_1, \cdot \cdot\cdot, t_m)$. We define a complex structure on $\mathcal G$ by saying the smooth section $u$ defines a holomorphic section of $\mathcal G$ if $u \wedge dt$ is a holomorphic local section of $K_{\cX}\otimes L$. Let $u$ be a smooth local section of $\mathcal{G}$, $u_t \in H^0(X_t, K_{X_t}+L_t)$,
this is an $L$-valued $(n,0)$-form on $X_t$. For example, if locally we write $u=fdz + \sum g_i\h{dz_i}\wedge dt_i$, then both $u$ and $fdz$ represent the same section of $\mathcal{G}$.
It is obvious that $\dbar u$ also be a smooth form on $\cX$. On account of $\dbar u \wedge dt \wedge d\bar{t} =0$, thus
$$ \dbar u= \sum \nu^j \wedge d \bar t_j + \sum \eta^j \wedge dt_j,
$$
where the $\nu^j$ and $\eta^j$ are smooth forms on $\cX$ and $\nu^j$ defines a section to $\mathcal{G}$. We define the $(0,1)$-part of the connection $\nabla$ on $\mathcal{G}$ by letting
\begin{equation}
\nabla^{0,1}u = \sum \nu^j \wedge d \bar t_j.
\end{equation}

\noindent Therefore $u$ is a holomorphic section if and only if $\dbar u \wedge dt = 0$, or if and only if $\dbar u = \sum \eta^j \wedge d t_j$.

Now we define the $(1,0)$-part of the Chern connection. If $u$ is a smooth section of $\mathcal{G}$, let $u^{\circ}:= u|_{\cX^{\circ}}$, We eliminate all terms containing $dt_j$ in $u^{\circ}$, as they represent the same section of $\mathcal{G}$.
Then we have
\begin{equation} \label{mu}
\partial^{\psi_L} u^{\circ} := e^{\psi_L} \partial(e^{-\psi_L} u^{\circ}) = (-\partial \psi_L) u^{\circ} + \partial u^{\circ} = \sum \mu^j \wedge d t_j.
\end{equation}
for some $L$-valued $(n,0)$-forms $\mu^j$ that contains terms of $\partial \psi_L$ on $\cX^{\circ}$. As we have seen in assumption \ref{V2}, the weight $\psi_L$ has singularities along the divisor $E$, so $\psi_L$ is smooth outside $E$. And we have $\mu^j$ is $L^2$ integrable with respect to $(\omega_E, e^{-\psi_L})$, since here $\partial \psi_L$ is taking the derivative with respect to the $t_j$ variables, not the $z_i$ variables (We choose $u^{\circ}$ such that it does not contain any $dt_j$ terms.).


The restrictions of $\mu^j$ defined in \eqref{mu} on each open fiber $X_t^{\circ}$ are in general not holomorphic. So we let $P(\mu^j)$ be the orthogonal projection of $\mu^j$ onto the space of holomorphic forms on each open fiber. Similar to the Remark $3.5$ in \cite{CGP}, since $P(\mu^j)$ is holomorphic on $X_t^{\circ}$ and $L^2$-integrable, therefore it extends holomorphically to the compact fiber $X_t$. We denote by $P'(\mu^j)$ the extension of $P(\mu^j)$. Similar to \cite[Lemmma 4.1]{Bo09}, we define the $(1,0)$-part of the Chern connection by
\begin{equation}
 \nabla^{1,0} u = \sum P'(\mu^j)d t_j.
\end{equation}

\noindent We also need to verify that
$$ \partial_{t_j}(u, v) = (P'(\mu^j), v) + (u, \dbar_{t_j}v)
$$
for any smooth section $u$ and $v$ to $\mathcal{G}$, here $\dbar v = \sum v^j\wedge d \bar t_j + \sum w^j \wedge d t_j$. We have defined that $\nabla^{0,1} v = \sum v^j \wedge d \bar t_j$, sometimes we write $v^j = \dbar_{t_j}v$. On the other hand, by the commutativity of $p_{\ast}$ and $\partial$, we have $\partial (u, v) =  \partial p_{\ast}([u, v]) = \partial \int_{X_t} c_n u \wedge \bar{v} e^{-\psi_L} = \partial \int_{X^{\circ}_t} c_n u \wedge \bar{v} e^{-\psi_L}$. Hence, one obtains
\begin{align*}
 \partial (u, v) = & \int_{X^{\circ}_t} c_n \partial^{\psi_L} u \wedge \bar{v} e^{-\psi_L} + (-1)^n \int_{X^{\circ}_t} c_n u \wedge \overline{\dbar v} e^{-\psi_L}\\
= & \int_{X^{\circ}_t} c_n \sum \mu^j \wedge d t_j \wedge \bar{v} e^{-\psi_L} + (-1)^n \int_{X^{\circ}_t} c_n u \wedge \sum\bar{v}^j\wedge d t_j e^{-\psi_L} \\
&+ (-1)^n \int_{X^{\circ}_t} c_n u \wedge \sum\bar{w}^j\wedge d \bar t_j e^{-\psi_L}.
\end{align*}
The last term above vanishes owing to the degree of forms, thus we obtain
\begin{align*}
\partial (u, v) =& \int_{X_t} c_n \sum \mu^j \wedge d t_j \wedge \bar{v} e^{-\psi_L} + (-1)^n \int_{X_t} c_n u \wedge \sum\bar{v}^j\wedge d t_j e^{-\psi_L} \\
=& \sum((\mu^j, v) + (u, v^j))d t_j \\
=& \sum ((P'(\mu^j), v) + (u, \dbar_{t_j}v))d t_j,
\end{align*}
and so we are led to the conclusion that the connection is compatible with the inner product.

\subsection{Nakano positivity}
To study the Nakano positivity, we will use the so-called $\ddbar$-Bochner-Kodaira technique illustrated in section $2$ in \cite{Bo09}. Let $E$ be a holomorphic vector bundle with connection $D$. Let $u_j$ be an $m$-tuple of holomorphic sections to $E$, satisfying $D u_j=0$ at fixed point $0$. Put $T_u = \sum (u_j, u_k) \widehat{dt_j \wedge d \bar{t}_k}$, here $\widehat{dt_j \wedge d \bar{t}_k}$ denotes the product of all differential $d t_i$ and $d \bar{t}_i$, except $dt_j$, $d \bar{t}_k$ and multiplied by a number of modulus $1$, so that $T_u$ is non-negative. Then by calculation we obtain $ i \ddbar T_u = - \sum(\Theta^E_{j,k}u_j, u_k) dV_t$. So $E$ is Nakano positivity at the given point if and only if this expression $i \ddbar T_u$ is negative for any choice of holomorphic sections $u_j$ satisfying $D u_j =0$ at the chosen point.

So we let $u_j$ be an $m$-tuple of holomorphic sections of $\mathcal{G}$ satisfying $\nabla^{1,0} u_j = 0$ at a given point $t=0$. Let
$$ T_u = \sum (u_j, u_k) \widehat{dt_j \wedge d \bar{t}_k}
$$
as above so that $T_u$ is nonnegative.
We put $\h{u} = \sum u_j \wedge \h{dt_j}$ be an $L$-valued $(N,0)$-form with $N = n+m-1$, and thus
$$ T_u = c_N p_{\ast}(\h{u} \wedge \ol{\h{u}} e^{-\psi_L}).
$$
Then we obtain
 \begin{align} \label{dbart}
 \dbar T_u &= c_N p_{\ast}(\dbar \h{u} \wedge \ol{\h{u}} e^{-\psi_L}) + (-1)^N c_N p_{\ast}(\h{u} \wedge \ol{\partial^{\psi_L}\h{u}} e^{-\psi_L})\\
 \nonumber &= (-1)^N c_N p_{\ast}(\h{u} \wedge \ol{\partial^{\psi_L}\h{u}} e^{-\psi_L}).
\end{align}
Therefore
$$ \partial \dbar T_u = (-1)^N c_N p_{\ast}(\partial^{\psi_L} \h{u} \wedge \ol{\partial^{\psi_L}\h{u}} e^{-\psi_L}) + c_N p_{\ast}(\h{u} \wedge \ol{\dbar\partial^{\psi_L}\h{u}} e^{-\psi_L}).
$$
Using the identity $\dbar \partial^{\psi_L} + \partial^{\psi_L} \dbar = \partial \dbar \psi_L$, we can change it to the next equation,
\begin{align} \label{ddbart}
\partial \dbar T_u = (-1)^N c_N p_{\ast}(\partial^{\psi_L} \h{u} \wedge \ol{\partial^{\psi_L}\h{u}} e^{-\psi_L}) - c_N p_{\ast}(\h{u} \wedge \ol{\h{u}} \wedge \ddbar \psi_L e^{-\psi_L}) + (-1)^N c_N p_{\ast}(\dbar\h{u} \wedge \overline{\dbar\h{u}} e^{-\psi_L}).
\end{align}
This formula make sense since $\dbar \h{u}$, $\partial^{\psi_L}\h{u}$, $\dbar\partial^{\psi_L}\h{u}$ and $\ddbar \psi_L$ are $L^2$-integrable with respect to $\omega_E$ and $e^{-\psi_L}$. Note that one have $ \ddbar \sum_{i\in I} a_i\log |z_i|^2 =0$ on $\cX^\circ$.
The next two lemmas help us to simplify the formula (\ref{ddbart}), which is the corollary of Hodge theory in section \ref{hodge theory}.

\begin{lem} \cite[Lemma $4.3$]{Bo09} \label{bo09}
    Let $u$ be an $(n,0)$-form on $\mathcal{X}$, representing a holomorphic section of $\mathcal{G}$, we can write
    \[
        \overline{\partial}u = \sum \eta^j \wedge dt_j.
    \]
    Then $\eta^j \wedge \omega$ are $\overline{\partial}$-exact on each fiber.
\end{lem}    
    \begin{proof}  
     We include the proof for the reader’s convenience. Since $u \wedge \omega$ is of bi-degree $(n+1,1)$, we can write locally
    \[
        u \wedge \omega = \sum u^j \wedge dt_j.
    \]
    The coefficients $u^j$ are not unique, but their restrictions on fibers are unique. Indeed, since $\sum u^j \wedge dt_j = 0$ implies $\sum u^j \wedge dt_j \wedge \h{d t_j} = u^j \wedge dt = 0$, the latter shows that $u^j$ vanishes when restricted to any fiber. Thus $u^j$ are well-defined global forms on any fiber. Moreover
    \[
        \sum \eta^j \wedge \omega \wedge dt_j = \overline{\partial}u \wedge \omega = \sum \overline{\partial}u^j \wedge dt_j,
    \]
    so $\sum (\eta^j \wedge \omega - \overline{\partial}u^j) \wedge dt_j = 0$. Again wedging with $\h{d t_j}$, we see that $\eta^j \wedge \omega = \overline{\partial}u^j$ on each fiber.
    \end{proof}

\begin{lem} \label{gap}
    Let $u$ be a holomorphic section of $\mathcal{G}$ on $D$ such that $\nabla u = 0$ at $t = 0$. Then $u$ can be represented by a $L^2$-form $\tilde{u}$ on $\cX^\circ$ such that $\overline{\partial}\tilde{u} = \eta^j \wedge dt_j$ for some $L^2$-form $\eta^j$ which are primitive on central open fiber $X^\circ_0$ and $\partial^{\psi_L}\tilde{u} = \mu^j \wedge dt_j$ for some $L^2$-form $\mu^j$ satisfying $\mu^j |_{X^\circ_0} = 0$.
\end{lem}

   \begin{proof}  
    We have $\partial^{\psi_L} u = \sum \mu^j \wedge dt_j$ and because $\nabla u(0) = 0$, therefore each $\mu^j_0 = \mu^j |_{X^\circ_0}$ is orthogonal to the space of $L^2$ holomorphic sections on $X^\circ_0$. We write $\overline{\partial} u = \sum \eta^j \wedge dt_j$, according the above Lemma \ref{bo09}, we know $\eta^j \wedge \omega = \overline{\partial}u^j$ on the center fiber. Since $\mu^j_0 - \overline{\partial}^*u^j$ is orthogonal to the space of $L^2$ holomorphic sections on $X^\circ_0$. The Hodge theory, especially Theorem \ref{Hodge1} shows that $\mu^j_0 - \overline{\partial}^*u^j$ is $\overline{\partial}^*$-exact, i.e., there exists a $\overline{\partial}$-closed $L^2$-form $\beta^j_0$ on $X^\circ_0$ such that $\overline{\partial}^* \beta^j_0 = \mu^j_0 - \overline{\partial}^* u^j$. We set $\gamma^j_0 = *(\beta^j_0 + u^j)$, then it easy to see
    \[
        \partial^{\psi_L} \gamma^j_0 = \mu^j_0,
    \]
    \[
        \overline{\partial}\gamma^j_0 \wedge \omega = \eta^j\wedge \omega.
\]
Let $\gamma^j$ be an arbitrary globally $L^2$ extension of $\gamma^j_0$ on $\cX^\circ$. Then $\tilde{u} = u - \sum \gamma^j \wedge dt_j$ is the representative we are looking for. We have $\overline{\partial}\tilde{u} = \overline{\partial}u - \sum \overline{\partial}\gamma^j \wedge dt_j = \sum(\eta^j - \overline{\partial}\gamma^j) \wedge dt_j$, thus on the center fiber
\[
(\eta^j - \overline{\partial}\gamma^j_0) \wedge \omega = \eta^j \wedge \omega - \overline{\partial}\gamma^j_0 \wedge \omega = 0.
\]
On the other hand, $\partial^{\psi_L} \tilde{u} = \partial^{\psi_L}u - \partial^{\psi_L}\gamma^j \wedge dt_j = \sum(\mu^j - \partial^{\psi_L}\gamma^j) \wedge dt_j$. When restrict it on $X^\circ_0$, we know $\mu^j - \partial^{\psi_L}\gamma^j = 0$ and therefore the claim is proved. 
\end{proof}

Note that we can assume $\dbar \gamma^j$, $\partial^{\psi_L}\gamma^j$ and $\dbar \partial^{\psi_L}\gamma^j$ are all $L^2$ integrable on $\cX^\circ$ with respect to $\omega_E$ and $e^{-\psi_L}$. Therefore, $\tilde{u}$ also satisfies these regularity properties. Indeed, since $\gamma^j_0$ possesses good regularity properties on the fibers, we work locally around $X_0^\circ$ (a tubular neighborhood), employing a coordinate cover and a partition of unity subordinate to it. On each coordinate patch, $\gamma^j_0$ can be extended locally by setting $\gamma^j(z,t):= \gamma_0^j(z)$. 
Using the partition of unity, we obtain a global extension over the tubular neighborhood of \( X_0^\circ \). To extend this to \( \mathcal{X}^\circ \), we multiply by a smooth function supported on this tubular neighborhood.

With the help of Lemma \ref{gap}, we can replace $u$ by $\w{u}$ in the definition of $T_u$,
$$ T_u = c_N p_{\ast}(\h{u} \wedge \ol{\h{u}} e^{-\psi_L}) = c_N p_{\ast}(\h{\w{u}} \wedge \ol{\h{\w{u}}} e^{-\psi_L}).
$$
Note that $T_u = T_{\tilde{u}}$ according to the construction of $\tilde{u}$. 
For ease of notation, we still denote by $u$ the section satisfying the conclusion in the above lemma. Then we can simplify the formula \eqref{ddbart} at the point $t=0$.
\begin{align} \label{ddbar}
\nonumber\partial \dbar T_u &= (-1)^N c_N p_{\ast}(\partial^{\psi_L} \h{u} \wedge \ol{\partial^{\psi_L}\h{u}} e^{-\psi_L}) -  c_N p_{\ast}(\h{u} \wedge \ol{\h{u}} \wedge \ddbar \psi_L e^{-\psi_L}) + (-1)^N c_N p_{\ast}(\dbar\h{u} \wedge \overline{\dbar\h{u}} e^{-\psi_L}) \\
&= -  c_N p_{\ast}(\h{u} \wedge \ol{\h{u}} \wedge \ddbar \psi_L e^{-\psi}) + (-1)^N c_N p_{\ast}(\dbar\h{u} \wedge \overline{\dbar\h{u}} e^{-\psi_L})
\end{align}

For further simplification, the next lemma would be useful.
\begin{lem}
Let $X$ be a complex manifold with complex dimension $n$, and if $\alpha$ is a form of bidegree $(n-1, 1)$. Then
$$ \sqrt{-1} c_{n-1} \alpha \wedge \bar{\alpha} = (\|\alpha\|^2 - \| \alpha\wedge \omega\|^2) dV_{\omega}.
$$
\end{lem}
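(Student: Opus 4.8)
The identity is pointwise and sesquilinear in $\alpha$, both sides being $(n,n)$-forms, so the plan is to verify it at an arbitrary point $x_0\in X$. There I would choose holomorphic coordinates $(z_1,\dots,z_n)$ for which $\omega(x_0)=\sqrt{-1}\sum_{j}dz_j\wedge d\bar z_j$, so that $\{dz_j\}$ is a unitary coframe and $dV_\omega=c_n\,dz\wedge d\bar z$ with $dz=dz_1\wedge\cdots\wedge dz_n$ and $c_n=(\sqrt{-1})^{n^2}$. Writing $\alpha=\sum_{j,k}\alpha_{j\bar k}\,\widehat{dz}_j\wedge d\bar z_k$, where $\widehat{dz}_j$ denotes $dz$ with the factor $dz_j$ deleted, one has $\|\alpha\|^2=\sum_{j,k}|\alpha_{j\bar k}|^2$, and the whole statement becomes a finite linear-algebra identity among the numbers $\alpha_{j\bar k}$.

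The cleanest way to organize the computation is through the Lefschetz decomposition. I would write $\alpha=\alpha_0+\omega\wedge\gamma$, where $\alpha_0$ is the primitive part of $\alpha$ (so that $\omega\wedge\alpha_0=0$) and $\gamma$ is an $(n-2,0)$-form; recall that every $(n-2,0)$-form is automatically primitive. This decomposition is orthogonal for the pointwise Hermitian product, so $\|\alpha\|^2=\|\alpha_0\|^2+\|\omega\wedge\gamma\|^2$. Because $\omega\wedge\alpha_0=0$, the two cross terms in $\alpha\wedge\bar\alpha$ vanish and $\alpha\wedge\omega=\omega^2\wedge\gamma$; hence
\begin{equation*}
\alpha\wedge\bar\alpha=\alpha_0\wedge\bar\alpha_0+\omega^2\wedge\gamma\wedge\bar\gamma,\qquad \|\alpha\wedge\omega\|^2=\|\omega^2\wedge\gamma\|^2.
\end{equation*}
Thus the asserted identity splits into a primitive piece and a $\gamma$-piece, which I would establish separately and then add.

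For the primitive piece I would invoke Weil's formula (the pointwise Hodge--Riemann relation): for a primitive form of bidegree $(n-1,1)$ one has $\star\,\bar\alpha_0=\kappa\,\bar\alpha_0$ for an explicit unimodular constant $\kappa=\kappa(n)$, and since $\beta\wedge\star\bar\beta=|\beta|^2\,dV_\omega$ this identifies $\sqrt{-1}\,c_{n-1}\,\alpha_0\wedge\bar\alpha_0$ as $\|\alpha_0\|^2\,dV_\omega$ once the constant is checked. For the $\gamma$-piece I would use the $\mathfrak{sl}_2$ (Lefschetz--$\Lambda$) commutation relations to express $\|\omega\wedge\gamma\|^2$ and $\|\omega^2\wedge\gamma\|^2$ as explicit multiples of $\|\gamma\|^2$, and likewise to evaluate $\sqrt{-1}\,c_{n-1}\,\omega^2\wedge\gamma\wedge\bar\gamma$; a short computation then confirms $\sqrt{-1}\,c_{n-1}\,\omega^2\wedge\gamma\wedge\bar\gamma=\bigl(\|\omega\wedge\gamma\|^2-\|\omega^2\wedge\gamma\|^2\bigr)\,dV_\omega$. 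Adding the two pieces gives the lemma. Alternatively, one can bypass the representation theory and simply expand both sides in the coordinate basis $\{\widehat{dz}_j\wedge d\bar z_k\}$, separating the diagonal components $\alpha_{j\bar j}$ from the off-diagonal ones: the off-diagonal part reproduces $\|\alpha\|^2$, while $\|\alpha\wedge\omega\|^2$ subtracts exactly the contribution of the trace.

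The main obstacle is purely the bookkeeping of the unimodular constants and signs. One must reconcile the power of $\sqrt{-1}$ carried by $c_{n-1}$, the constant $c_n$ appearing in $dV_\omega=c_n\,dz\wedge d\bar z$, and the reordering signs produced by wedging $\widehat{dz}_j\wedge d\bar z_k$ against its conjugate and against $\omega$; equivalently, one must compute the exact constant $\kappa$ in Weil's formula together with the Lefschetz norm constants for $\omega\wedge\gamma$ and $\omega^2\wedge\gamma$. The delicate point is that the diagonal (trace) components of $\alpha$ enter $\alpha\wedge\bar\alpha$ and $\alpha\wedge\omega$ with opposite relative signs, so that only after all unimodular factors are tracked consistently with the normalization $c_n=(\sqrt{-1})^{n^2}$ does the clean difference $\|\alpha\|^2-\|\alpha\wedge\omega\|^2$ emerge.
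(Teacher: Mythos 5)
The paper contains no proof of this lemma to compare yours against: it is stated bare, as a known pointwise linear-algebra fact (it is essentially Lemma 4.2 of Berndtsson's paper \cite{Bo09}), and the text immediately applies it to the primitive form $\eta$. So your proposal has to stand on its own. Its architecture is the standard and correct one: the identity is pointwise and sesquilinear, so one may work in a unitary coframe; the Lefschetz decomposition $\alpha=\alpha_0+\omega\wedge\gamma$ with $\gamma$ of bidegree $(n-2,0)$ is orthogonal; the cross terms in $\alpha\wedge\bar{\alpha}$ vanish because $\omega\wedge\alpha_0=0$; and the statement thereby splits into a primitive piece (Weil's formula) and an $L$-piece, for which the $\mathfrak{sl}_2$ relations give $\|\omega\wedge\gamma\|^2=2\|\gamma\|^2$ and $\|\omega^2\wedge\gamma\|^2=4\|\gamma\|^2$. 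This reduction is faithful, and each tool you invoke is the right one.

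The gap is that the entire content of the lemma now lives in the two constant evaluations, which you yourself identify as ``the main obstacle'' and then do not perform, asserting instead that a short computation ``confirms'' the identity. It does not, as stated. With the paper's conventions ($c_{n-1}=(\sqrt{-1})^{(n-1)^2}$, the order $\alpha\wedge\bar{\alpha}$, $dV_\omega=c_n\,dz\wedge d\bar z$), Weil's formula gives $\star\,\bar{\alpha}_0=(-1)^{n(n+1)/2}(\sqrt{-1})^{2-n}\,\bar{\alpha}_0$, whence $\sqrt{-1}\,c_{n-1}\,\alpha_0\wedge\bar{\alpha}_0=(-1)^n\|\alpha_0\|^2dV_\omega$, and likewise $\sqrt{-1}\,c_{n-1}\,\omega^2\wedge\gamma\wedge\bar{\gamma}=2(-1)^{n+1}\|\gamma\|^2dV_\omega=(-1)^n\bigl(\|\omega\wedge\gamma\|^2-\|\omega^2\wedge\gamma\|^2\bigr)dV_\omega$. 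So your plan, carried out carefully, proves $\sqrt{-1}\,c_{n-1}\,\alpha\wedge\bar{\alpha}=(-1)^n\bigl(\|\alpha\|^2-\|\alpha\wedge\omega\|^2\bigr)dV_\omega$: the printed identity is correct for even $n$ and has the wrong sign for odd $n$. The decisive check is $n=1$: $\alpha=a\,d\bar z$ is automatically primitive, $c_0=1$, and $\sqrt{-1}\,\alpha\wedge\bar{\alpha}=\sqrt{-1}\,|a|^2\,d\bar z\wedge dz=-|a|^2dV_\omega$, whereas the right-hand side is $+|a|^2dV_\omega$. The universally correct statement has $\bar{\alpha}\wedge\alpha$ on the left (reversing the order of two $n$-forms costs exactly $(-1)^{n^2}=(-1)^n$), or equivalently a factor $(-1)^n$ on the right; any complete proof must record this, rather than claim the constants come out to $+1$. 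A smaller point: your coordinate ``alternative'' is garbled --- primitivity of an $(n-1,1)$-form is a symmetry condition on the coefficient matrix $A=(\alpha_{j\bar k})$ in a suitably signed basis, not a trace condition, and $\|\alpha\wedge\omega\|^2$ is the squared norm of the skew part of $A$; in coordinates the identity reads $\sqrt{-1}\,c_{n-1}\,\alpha\wedge\bar{\alpha}=(-1)^n\,\mathrm{tr}(A\bar A)\,dV_\omega$ with $\mathrm{tr}(A\bar A)=\|\alpha\|^2-\|\alpha\wedge\omega\|^2$, and the diagonal entries play no distinguished role.
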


\noindent Since we know $\eta$ is primitive on central open fiber $X_0^{\circ}$, we can further turn formula \eqref{ddbar} into,
$$ \sqrt{-1}\partial \dbar T_u = -  c_N p_{\ast}(\h{u} \wedge \ol{\h{u}} \wedge \sqrt{-1}\ddbar \psi_L e^{-\psi_L}) - \int_{X_0}\| \eta\|^2dV_t.
$$
Because $\sqrt{-1}\partial \dbar \psi_L$ is a positive current, and $c_N\h{u} \wedge\ol{\h{u}}$ be a strongly positive form. We obtain $c_N \h{u} \wedge \ol{\h{u}} \wedge \sqrt{-1}\ddbar \psi_L e^{-\psi_L}$ be a positive current. The push-forwards of positive current under proper morphism is again positive current. For details, we refer the reader to \cite[Chapter III. \S1]{bookJP}. It is obviously that $\sqrt{-1}\ddbar T_u$ is smooth $(m,m)$-form on $D$ and hence $c_N p_{\ast}(\h{u} \wedge \ol{\h{u}} \wedge \sqrt{-1}\ddbar \psi_L e^{-\psi_L})$ be the positive form. This formula means that $\sqrt{-1 }\partial \dbar T_u \leq 0$ and so $\mathcal{G}$ is positivity in the sense of Nakano.
Now the proof of Theorem \ref{Thm1} is completed.

\section{Some $L^2$ Hodge theory} \label{hodge theory}
We introduce a few results of $L^2$-Hodge theory for $L$-valued forms on a complete manifold endowed with a Poincar\'e type metric, following closely \cite{JCMP} and \cite{CGP}. 


Let $X$ be a $n$-dimensional compact K\"ahler manifold, and let $(L, h_L)$ be a line bundle  endowed with a (singular) metric $h_L= e^{-\psi_L}$ such that
\begin{enumerate} [label={\color{violet}(R.\arabic*)}]
\item \label{R1} $h_L$ has klt type singularities along divisor $E$ with simple normal crossing supports as Example \ref{3-ex};
\item\label{R2} $\omega_E$ be a complete K\"ahler metric on $X^{\circ} := X\backslash E$ with Poincar\'e singularities along $E$ as \eqref{pmetric};
\item \label{R3} Its Chern curvature satisfies $\displaystyle i\Theta_{h_L}(L) \geq 0$ in the sense of currents.
\end{enumerate}

\noindent We denote by $D'$ the $(1,0)$-part of the Chern connection on $(L,h_L)$. The famous Bochner--Kodaira--Nakano formula link up two Laplace $\Delta''= \dbar \dbar^*+\dbar^*\dbar$ and $\Delta'=D'D'^*+D'^*D'$ as follows,
\begin{equation} \label{bochner}
\Delta''=\Delta'+[i\Theta_{h_L} (L), \Lambda_{\omega_E}].
\end{equation}

\noindent Let us also recall the well-known fact that the self-adjoint operator
\[A:=[i\Theta_{h_L} (L), \Lambda_{\omega_E}]\] is semi-positive when acting on $(n,q)$ forms, for any $0\le q \le n$ as long as $i\Theta_{h_L}(L) \ge 0$.
An immediate consequence of \eqref{bochner} is that for a $L^2$-integrable form $u$ with values in $L$ of any type in the domains of $\Delta'$ and $\Delta''$, we have
\begin{equation}
\label{bochner2}
\|\dbar u\|^2_{L^2}+\|\dbar^*u\|^2_{L^2}=\|D' u\|^2_{L^2}+\|D'^*u\|^2_{L^2}+\int_{X^\circ}\langle Au,u\rangle dV_{\omega_E},
\end{equation}
where $\|\cdot\|_{L^2}$ (resp. $\langle, \rangle$) denotes the $L^2$-norm (resp. pointwise hermitian product) taken with respect to $(h_L, \omega_E)$. Let $\star:\Lambda^{p,q}T_{X^\circ}^*\to \Lambda^{n-q,n-p}T_{X^\circ}^*$ be the Hodge star with respect to $\omega_E$;

\begin{thm} \cite[Theorem 3.6]{CGP}\label{Hodge1}
Let $X$ be a $n$-dimensional compact K\"ahler manifold, and let $(L, h_L)$ be a line bundle  endowed with a (singular) metric $h_L= e^{-\psi_L}$ such that the assumption \ref{R1}, \ref{R2}, \ref{R3} are satisfied, then we have the following Hodge decomposition
\begin{equation*}
L^2_{n, 1}(X^\circ, L)= {\mathcal H}_{n,1}(X^\circ, L)\oplus
\im \bar{\partial} \oplus \im \bar{\partial}^*,
\end{equation*}
here ${\mathcal H}_{n,1}(X^\circ, L)$ is the space of $L^2$ $\Delta^{''}$-harmonic $(n,1)$-forms.	
\end{thm}

The main goal of this section is to establish the following decomposition theorem, which is analogous to the corresponding result in \cite{CGP}. For the reader's convenience, we will sketch the proof and emphasize the differences.
Let $X$ be a $n$-dimensional compact K\"ahler manifold, and let $(L, h_L)$ be a line bundle  endowed with a (singular) metric $h_L= e^{-\psi_L}$ such that
\begin{enumerate} [label={\color{violet}(S.\arabic*)}]
\item \label{S1} $h_L$ has logarithmic type singularities along divisor $E$ with simple normal crossing supports, i.e., its local weights $\psi_L$ on $\Omega$ can be written as
\begin{equation*}
\psi_L \equiv - \sum_{I} b_I\log \big(\phi_I(z)-\log (\prod_{i\in I}|z_i|^{2k_i}) \big)
\end{equation*}
modulo $\mathcal C^\infty$ functions, where $b_I$ are positive real numbers satisfying that $b_I < 1$ for all $I$, all $k_i$ are non-negative integers and $(\phi_I)_I$ are smooth functions on $\Omega$. The set of indexes in the sum coincides with the non-empty subsets of $\{1,\dots, r\}$.
\item\label{S2} $\omega_E$ be a complete K\"ahler metric on $X^{\circ} := X\backslash E$ with Poincar\'e singularities along $E$ as \eqref{pmetric};
\item \label{S3} Its Chern curvature satisfies $\displaystyle i\Theta_{h_L}(L) \geq 0$ in the sense of currents.
\end{enumerate}

\begin{thm}\label{Hodge}
Let $X$ be a $n$-dimensional compact K\"ahler manifold, and let $(L, h_L)$ be a line bundle  endowed with a (singular) metric $h_L= e^{-\psi_L}$ such that the assumption \ref{S1}, \ref{S2}, \ref{S3} are satisfied, then we have the following Hodge decomposition
\begin{equation*}
L^2_{n, 1}(X^\circ, L)= {\mathcal H}_{n,1}(X^\circ, L)\oplus
\im \bar{\partial} \oplus \im \bar{\partial}^*,
\end{equation*}
here ${\mathcal H}_{n,1}(X^\circ, L)$ is the space of $L^2$ $\Delta^{''}$-harmonic $(n,1)$-forms.	
\end{thm}

Firstly we need a lemma that assures we can use the compact support approximation.
\begin{lem} \cite[Lemma 3.7]{CGP} \label{cutoff}
There exists a family of smooth functions $(\mu_\ep)_{\ep> 0}$ with the following properties.
	\begin{enumerate}
		\item[\rm (1)] For each $\ep> 0$, the function $\mu_\ep$ has compact support in $X^\circ$, and $0\leq \mu_\ep\leq 1$;
		\item[\rm (2)] The sets $(\mu_\ep= 1)$ are providing an exhaustion of $X^\circ$;
		\item[\rm (3)] There exists a positive constant $C> 0$ independent of $\ep$ such that
		we have $$\displaystyle \sup_{X^\circ}\left(|\partial \mu_\ep|_{\omega_E}^2+ |\ddbar \mu_\ep|^2_{\omega_E}\right)\leq C.$$
	\end{enumerate}
\end{lem}

\noindent We also need the local Poincar\'e type inequality for the $\dbar$-operator when acting on $L$-valued $(p,0)$-forms. The next proposition is very important for our proposal. This is about the Poincar\'e type inequality involving the metric of a line bundle, and the metric $\psi_L$ is no longer with analytic singularities (as in \cite{JCMP}) but with Poincar\'e type singularities. Therefore we need to reprove it.

\begin{prop}\label{poinc}
Let $\displaystyle (\Omega_j)_{j=1,\dots, N}$ be a finite union of coordinate sets of $ X$ covering $E$, and let $\ U$ be any open subset contained in their union. Let $\tau$ be a $(p, 0)$-form with compact support in a set $U\setminus E\subset X$ and values in $(L, h_L)$. Then we have
\begin{equation} \label{eq57}
 \int_U|\tau|_{\omega_E}^2e^{-\psi_L}dV_{\omega_E}\leq C \int_U|\dbar\tau|^2_{\omega_E}e^{-\psi_L}dV_{\omega_E}
\end{equation}
where $C$ is a positive numerical constant.
\end{prop}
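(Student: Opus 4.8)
The plan is to prove \eqref{eq57} by localizing, replacing $\omega_E$ and $\psi_L$ by explicit product models, and finally reducing everything to a one--variable weighted Hardy inequality on a half--line. First I would choose a partition of unity subordinate to $(\Omega_j)$ and reduce to the case where $\tau$ is supported in a single chart $\Omega$ with $E\cap\Omega=\{z_1\cdots z_r=0\}$. On such a chart, hypothesis \ref{D2} (the Poincar\'e structure \eqref{pmetric}) makes $\omega_E$ quasi-isometric to the product model
\[
\omega_{\mathrm{mod}}=\sum_{i=1}^r\frac{i\,dz_i\wedge d\bar z_i}{|z_i|^2\log^2|z_i|^2}+\sum_{i=r+1}^n i\,dz_i\wedge d\bar z_i,
\]
while \ref{D1} makes $e^{-\psi_L}$ comparable to the model weight $\prod_I(\phi_I-\log\prod_{i\in I}|z_i|^{2k_i})^{b_I}$ up to a bounded smooth factor. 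Since passing to these models alters each side of \eqref{eq57} only by fixed multiplicative constants, it is enough to prove the inequality for $\omega_{\mathrm{mod}}$ and the model weight.

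Next I would expand $\tau=\sum_{|J|=p}\tau_J\,dz_J$ and use that the $dz_J$ are mutually orthogonal and of finite length for $\omega_{\mathrm{mod}}$, together with $\dbar\tau=\sum_{J,k}\partial_{\bar z_k}\tau_J\,d\bar z_k\wedge dz_J$, to cancel the common factor $|dz_J|^2_{\omega_{\mathrm{mod}}}$ and reduce to the scalar inequality
\[
\int_U|f|^2\,e^{-\psi_L}\,dV_{\omega_{\mathrm{mod}}}\le C\sum_k\int_U|\partial_{\bar z_k}f|^2\,|d\bar z_k|^2_{\omega_{\mathrm{mod}}}\,e^{-\psi_L}\,dV_{\omega_{\mathrm{mod}}}
\]
for $f=\tau_J$ compactly supported in $U\setminus E$. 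Here the hypothesis of compact support \emph{away from} $E$ is essential: it rules out the kernel of $\dbar$, since a holomorphic $f$ with compact support vanishes, exactly as in the classical Poincar\'e--Friedrichs inequality.

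The heart of the argument is a one--variable estimate in a punctured--disk direction $z_j$ with $j\le r$. Writing $z_j=e^{-s/2}e^{i\theta}$, so that $s=\log\frac{1}{|z_j|^2}\in(0,\infty)$, and expanding $f$ in a Fourier series in $\theta$, the model metric flattens into a cylindrical one; a direct computation gives $|d\bar z_j|^2_{\mathrm{mod}}\,dV_{\mathrm{mod}}\simeq ds\,d\theta$ and $dV_{\mathrm{mod}}\simeq s^{-2}\,ds\,d\theta$, so that the scalar inequality becomes, mode by mode, a Hardy inequality
\[
\int_0^\infty|g|^2\,w(s)\,\frac{ds}{s^2}\le C\int_0^\infty|g'|^2\,w(s)\,ds,
\]
where $w(s)$ is the transform of $e^{-\psi_L}$ and grows like $s^{\beta_j}$ with effective exponent $\beta_j=\sum_{I\ni j}b_I$ (the nonzero Fourier modes are at least as favorable because of the extra angular term). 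For compactly supported $g$ this inequality holds with a constant \emph{independent of the support} precisely when $\beta_j\neq1$, by the Muckenhoupt criterion; the mildness hypothesis $b_I<1$ in \ref{D1} is exactly what forces each logarithmic factor to contribute a subcritical amount to this exponent. Summing over Fourier modes, then over $J$, and finally over the charts of the partition of unity recovers \eqref{eq57}.

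I expect the main obstacle to be the multivariable bookkeeping near the deepest strata of $E$, where several $z_i$ degenerate simultaneously and the logarithmic factors in different directions are coupled inside the product $\prod_I(\cdots)^{b_I}$. The delicate point is that the effective one--variable exponent is the \emph{sum} $\beta_j=\sum_{I\ni j}b_I$, so one must guarantee that this combined exponent stays off the critical value $1$ and, crucially, that the Hardy constant does not deteriorate as the support of $\tau$ is pushed toward the higher--codimension strata. It is precisely in taming this coupled log--weight with a single uniform constant that the subcriticality coming from $b_I<1$ is used; once the one--variable Hardy estimate is established uniformly, the reduction by Fubini and quasi-isometry is routine.
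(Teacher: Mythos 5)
Your overall strategy is the same as the paper's: localize to a chart, replace $\omega_E$ and $e^{-\psi_L}$ by the product model and the model weight, expand in Fourier modes, and reduce to one-variable weighted Hardy-type inequalities (the paper's \eqref{d1} and \eqref{d2}, which it proves by integration by parts and Cauchy--Schwarz rather than by citing a Muckenhoupt criterion). But there is a genuine gap at your decisive step. Your argument needs the effective exponent $\beta_j=\sum_{I\ni j}b_I$ to stay away from the critical value $1$, and you assert that the hypothesis $b_I<1$ of \ref{A2}/\ref{D1} guarantees this. It does not: that hypothesis bounds each $b_I$ individually, not their sums. For instance, with $r=2$, $k_i=1$, and $b_{\{1\}}=b_{\{1,2\}}=\tfrac12$ (all $b_I<1$), the weight seen in the $z_1$-direction behaves like $s_1^{1}$ for $s_1\gg s_2$, where $s_i=\log\frac{1}{|z_i|^2}$; so $\beta_1=1$ exactly. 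At this critical exponent the scale-invariant Hardy inequality $\int |g|^2 s^{-1}\,ds\le C\int |g'|^2 s\,ds$ is false (logarithmic cut-off test functions make the ratio blow up), and since it is invariant under $s\mapsto\lambda s$, pushing the support of $\tau$ closer to $E$ cannot restore it --- your own Muckenhoupt criterion says precisely this. So the concluding claim of your proposal, that ``the subcriticality coming from $b_I<1$'' tames the coupled weight, is a non sequitur: nothing in the hypotheses keeps $\beta_j$ off $1$, and in that case the one-direction-at-a-time Fubini reduction --- yours, and for that matter the paper's reduction of \eqref{d2} to \eqref{f1} --- breaks down. Any repair needs an additional mechanism your proposal does not contain, e.g.\ the observation that the coupled factor $(\phi_I+\sum_{i\in I}k_is_i)^{b_I}$ cannot be critical in two directions over the same region of $(s_1,\dots,s_r)$, so that one may switch which direction carries the Hardy estimate according to the regime.

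A second, more minor inaccuracy: ``cancelling the common factor $|dz_J|^2_{\omega_{\mathrm{mod}}}$'' is not legitimate, since it is a function and dropping it changes which one-variable inequality is required. For a derivative direction $j\le r$ that belongs to $J$, the correct reduction keeps the factor $|z_j|^2\log^2|z_j|^2$, and in the variable $s$ the two sides carry the weights $s^{\beta}e^{-s}\,ds$ and $s^{\beta+2}e^{-s}\,ds$ (your claim $|d\bar z_j|^2_{\mathrm{mod}}dV_{\mathrm{mod}}\simeq ds\,d\theta$ misses the factor $e^{-s}$): this pair is not scale invariant, and the inequality holds for \emph{every} $\beta>0$ --- it is the paper's \eqref{d1}. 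Only the directions $j\le r$ outside $J$ produce the scale-invariant pair $s^{\beta-2}\,ds$ versus $s^{\beta}\,ds$ --- the paper's \eqref{d2} --- and it is only there that the critical value $\beta=1$ is dangerous. Keeping these two cases separate, as the paper does, both simplifies the multivariable bookkeeping you worry about and isolates exactly where your critical-exponent problem lives.
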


\begin{proof}
We consider the restriction of the form $\tau$ to some coordinate open set $\Omega$ whose intersection with $E$ is of type $z_1\dots z_r= 0$. It can be written as sum of forms of type $\tau_I:= f_Idz_I$ and assume that $I\cap \{1,\dots, r\}= \{1,\dots, p\}$ for some $p$. We have
\begin{equation}\label{d222}
|\tau_I|_g^2e^{-\psi_L}dV_g = \frac{|f_I|^2e^{-\psi_L}}{\prod_{\alpha=p+1}^r|z_\alpha|^2\log^2|z_\alpha|^2}
d\lambda.
\end{equation}

\noindent In \eqref{d222} we denote by $g$ the model Poincar\'e metric
$$\sum_{i=1}^r\frac{\sqrt{-1}dz_i\wedge d\bar z_i}{|z_i|^2\log^2|z_i|^2}+ \sum_{i=r+1}^n{\sqrt{-1}dz_i\wedge d\bar z_i},$$
\noindent and $d \lambda = \frac{\omega^n}{n!}$, where $\omega = \sum^n_{i=1} \sqrt{-1}d z_i \wedge d \bar z_i$. By our assumption \ref{R2}, we know
$$
e^{-\psi_L} \equiv
\prod_{I}\left(\phi_I(z)-\log \big(\prod_{i\in I}|z_i|^{2k_i}\big)\right)^{b_I}
$$
modulo $\mathcal{C}^{\infty}$ function. Now for the $\dbar \tau_I$ we have
\begin{align} \label{d333}
  |\dbar \tau_I|_g^2e^{-\psi_L}dV_g= & \sum_{i=1}^r\left|\frac{\partial f_I}{\partial \bar z_i}\right|^2\frac{|z_i|^2\log^2|z_i|^2e^{-\psi_L}}{\prod_{\alpha=p+1}^r|z_\alpha|^2\log^2|z_\alpha|^2}d\lambda
  +  \sum_{i=r+1}^n\left|\frac{\partial f_I}{\partial \bar z_i}\right|^2\frac{e^{-\psi_L}}{\prod_{\alpha=p+1}^r|z_\alpha|^2\log^2|z_\alpha|^2} d\lambda.
\end{align}

\noindent To prove the claim, we can reduce it to the one-dimension case by Fubini's theorem, i.e., unit disk $\mathbb{D}$ in complex plane $\mathbb{C}$. Combining \eqref{d222} and \eqref{d333}, it is easy to see that the next two inequalities are all we need, this is
\begin{equation}\label{d1}
 \int_{\mathbb D}|f|^2  (-\log |z|)^b d\lambda(z)   \leq C \int_{\mathbb D}\left| \frac{\partial f }{\partial \bar z}\right|^2 |z|^{2} (-\log |z|)^{b+2} d\lambda(z),
\end{equation}
as well as
\begin{equation}\label{d2}
\int_{\mathbb D}|f|^2 \frac{(-\log |z|)^{b} d\lambda(z)}{|z|^{2}\log^2 |z|} \leq C \int_{\mathbb D}\left| \frac{\partial f }{\partial \bar z}\right|^2 (-\log |z|)^{b} d\lambda(z).
\end{equation}
Here $b$ is positive real number and we can arrange $f$ has small compact support, i.e., supp$f\subset \{ z~|~ 0<|z|<\epsilon \}$ where $\epsilon$ small enough. Furthermore, it's better to reduce this integration to real line $\mathbb{R}$, and hence we consider the Fourier series
\begin{equation*}
f=\sum_{k\in \mathbb Z}a_k(t)e^{\sqrt{-1}k\theta}
\end{equation*}
of $f$, thus we have
\begin{equation*}
\frac{\partial f }{\dbar z}= \sum_{k\in \mathbb Z}\left(a_k'(t)- \frac{k}{t}a_k(t)\right)e^{\sqrt{-1}k\theta}.
\end{equation*}
The identity
\begin{equation*}
t^k\frac{d}{dt}\left(\frac{a_k}{t^k}\right)= a_k'(t)- \frac{k}{t}a_k(t)
\end{equation*}
reduces the proof of our statement to the next inequalities. We have
$$\int_{\mathbb D}|f|^2  (-\log |z|)^b d\lambda(z) = \sum_k \int_0^1  |a_k(t)|^2 (-\log t)^b t dt $$
and
$$  \int_{\mathbb D}\left| \frac{\partial f }{\partial \bar z}\right|^2 |z|^{2} (-\log |z|)^{b+2} d\lambda(z) =
\sum_k \int_0^1 |t^k \frac{d}{dt}(\frac{a_k}{t^{k}})|^2 t^3 (-\log t)^{b+2} dt.
$$
\noindent Let $b_k = \frac{a_k}{t^k}$, then we turn the above two equalities into
$$\int_{\mathbb D}|f|^2  (-\log |z|)^b d\lambda(z) = \sum_k \int_0^1  |b_k(t)|^2 (-\log t)^b t^{2k+1} dt $$
and
$$  \int_{\mathbb D}\left| \frac{\partial f }{\partial \bar z}\right|^2 |z|^{2} (-\log |z|)^{b+2} d\lambda(z) =
\sum_k \int_0^1 |b'_k(t)|^2 t^{2k+3} (-\log t)^{b+2} dt.
$$
\noindent So the inequality \eqref{d1} is equivalent to the next inequality
\begin{equation} \label{f1}
\sum_k \int_0^1  |b_k(t)|^2 (-\log t)^b t^{2k+1} dt  \leq C  \sum_k \int_0^1 |b'_k(t)|^2 t^{2k+3} (-\log t)^{b+2} dt.
\end{equation}
\noindent Here $k \in \mathbb{Z}, b\in \mathbb{R}^+$ and $b_k(t)$ has small compact support. In fact the inequality \eqref{d2} is also come from \eqref{f1}. Now we prove the inequality \eqref{f1}, according to integration by part, we have
\begin{align}
\nonumber  \int_0^1  |b_k(t)|^2 (-\log t)^b t^{2k+1} dt \underline{}=& -  \int_0^1  |b_k(t)|^2 (-\log t)^{b+2} t^{2k+2} d(\frac{1}{\log t}) \\
\nonumber =& \int_0^1 \frac{1}{\log t} d(|b_k|^2 t^{2k+2} (-\log t)^{b+2}).
\end{align}
\noindent As a consequence, we obtain
 \begin{align} \label{f2}
&\int_0^1  |b_k(t)|^2 (-\log t)^b t^{2k+1} dt \\
\nonumber =& - \int_0^1(b'_k \bar b_k + b_k \bar b'_k) t^{2k+2}(-\log t)^{b+1} dt + \int_0^1 \frac{|b_k|^2 t^{2k+1}}{\log t} \big[(2k+2)(-\log t)^{b+2}- (b+2)(-\log t)^{b+1}\big] dt.
\end{align}
\noindent Now we divide it into three cases by the sign of $(2k+2)$:

\noindent $(1)$ if $2k+2 =0$, then the second term in \eqref{f2} above is $ \int_0^1 (b+2)|b_k|^2 t^{2k+1}(-\log t)^b dt $. Thus we have
\begin{align} \label{f3}
 \nonumber &\int_0^1 (b+1)|b_k|^2 t^{2k+1}(-\log t)^b = \int_0^1(b'_k \bar b_k + b_k \bar b'_k) t^{2k+2}(-\log t)^{b+1} dt \\
  &\leq \sqrt{\int_0^1 |b_k|^2 t^{2k+1}(-\log t)^b dt} \sqrt{\int_0^1 |b'_k|^2 t^{2k+3}(-\log t)^{b+2} dt}.
\end{align}

\noindent Therefore we have the desired inequality
$$\int_0^1 |b_k|^2 t^{2k+1}(-\log t)^b dt \leq C \int_0^1 |b'_k|^2 t^{2k+3}(-\log t)^{b+2} dt.
$$

\noindent $(2)$ if $ 2k+2> 0 $, when $t$ is small enough, we have
$$ (2k+2)(-\log t)^{b+2} - (b+2)(-\log t)^{b+1} \geq (b+2)(-\log t)^{b+1}.
$$
Then the second term in \eqref{f2} is small than
$$
 \int_0^1 \frac{|b_k|^2 t^{2k+1}}{\log t} \big[(2k+2)(-\log t)^{b+2}- (b+2)(-\log t)^{b+1}\big] dt \leq  -(b+2) \int_0^1 |b_k|^2 t^{2k+1} (-\log t)^b.
$$
A similar calculation yields inequality \eqref{f1} holds.

\noindent $(3)$ if $2k+2 < 0$ and also make $t$ small enough, we have
$$ (2k+2)(-\log t)^{b+2} - (b+2)(-\log t)^{b+1} \leq -(b+2)(-\log t)^{b+1}.
$$
Hence the second term in \eqref{f2} is bigger than
$$ \int_0^1 \frac{|b_k|^2 t^{2k+1}}{\log t} \big[(2k+2)(-\log t)^{b+2}- (b+2)(-\log t)^{b+1}\big] dt \geq (b+2) \int_0^1 |b_k|^2 t^{2k+1}(-\log t)^{b}.
$$
Inequality \eqref{f2} now become to
$$  \int_0^1 |b_k|^2 t^{2k+1} (-\log t)^{b} dt \geq - \int_0^1(b'_k \bar b_k + b_k \bar b'_k) t^{2k+2}(-\log t)^{b+1} dt + \int_0^1 (b+2) |b_k|^2 t^{2k+1} (-\log t)^{b} dt.
$$
By Cauchy inequality as \eqref{f3} above, we can also easily deduce that inequality \eqref{f1} is valid in this case.

In summary, the proof of the proposition is completed.
\end{proof}

\noindent Once the local Poincar\'e inequality was established, as a corollary, we can obtain the following global Poincar\'e inequality for $\dbar$. In \cite{CGP}, the authors introduce for any integer $0\le p \le n$ the space
\begin{equation} \label{Hp}
H^{(p)}:=\{F\in  H^0(X^\circ, \Omega_{X^\circ}^{p}\otimes L)\cap L^2;  \,\, \int_{X^\circ}\langle A\star F, \star F\rangle dV_{\omega_E}=0\}
\end{equation}
and we can observe by Bochner formula that for a $L^2$ integrable, $L$-valued $(p,0)$-form $F$, one has
\begin{equation} \label{hp}
 \Delta''(\star F)=0 \Longleftrightarrow \Delta'(\star F)=0 \text{ and } \int_{X^\circ}\langle A\star F, \star F\rangle dV_{\omega_E}=0 \Longleftrightarrow F\in H^{(p)}  .
\end{equation}

\begin{prop}\label{cor:2} \cite[Proposition 3.9]{CGP}
	Let $p\leq n$ be an integer.  There exists a positive constant $C> 0$ such
	that the following inequality holds
	\begin{equation}\label{eq3233}
	\int_{X^\circ}|u|^2_{\omega_E}e^{-\psi_L}dV_{\omega_E}\leq C \left(\int_{X^\circ}|\dbar u|^2_{\omega_E}e^{-\psi_L}dV_{\omega_E} +
	\int_{X^\circ} \langle A \star u , \star u \rangle d V_{\omega_E}\right)
	\end{equation}
	for any $L$-valued form $u$ of type $(p, 0)$ which belongs to the domain of $\dbar$ and which is orthogonal to the  space $H^{(p)}$ defined by \eqref{Hp}. 	Here $\star$ is the Hodge star operator with respect to the metric $\omega_E$.
\end{prop}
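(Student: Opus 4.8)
The plan is to argue by contradiction, using a compactness argument that separates the behaviour of forms near the divisor $E$ from their behaviour on compact subsets of $X^\circ$. The two essential inputs are the local Poincar\'e inequality (Proposition \ref{poinc}), which prevents $L^2$-mass from concentrating along $E$, and the cutoff functions of Lemma \ref{cutoff}, which together with the completeness of $\omega_E$ allow one to move freely between compactly supported forms and arbitrary forms in the domain of $\dbar$.

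\emph{Step 1: extend Proposition \ref{poinc}.} First I would upgrade the local inequality from compactly supported forms to any $L^2$, $L$-valued $(p,0)$-form $u$ in the domain of $\dbar$ whose support lies in a fixed neighbourhood $U$ of $E$. Setting $u_\ep := \mu_\ep u$ with $\mu_\ep$ as in Lemma \ref{cutoff}, each $u_\ep$ is compactly supported in $U\setminus E$, and
\[
\dbar u_\ep = \mu_\ep\,\dbar u + \dbar\mu_\ep\wedge u .
\]
Because $\{\mu_\ep=1\}$ exhausts $X^\circ$ and $|\dbar\mu_\ep|_{\omega_E}$ is uniformly bounded, the error term $\dbar\mu_\ep\wedge u$ is supported where the $u$-mass vanishes and hence tends to $0$ in $L^2$, while $\mu_\ep\dbar u\to\dbar u$. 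Applying Proposition \ref{poinc} to $u_\ep$ and letting $\ep\to 0$ yields $\int_U|u|^2_{\omega_E}e^{-\psi_L}dV_{\omega_E}\le C\int_U|\dbar u|^2_{\omega_E}e^{-\psi_L}dV_{\omega_E}$.

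\emph{Step 2: set up the contradiction and identify the weak limit.} Suppose \eqref{eq3233} fails. Then there is a sequence $(u_j)$ of $(p,0)$-forms in the domain of $\dbar$, each orthogonal to $H^{(p)}$, normalized so that $\int_{X^\circ}|u_j|^2_{\omega_E}e^{-\psi_L}dV_{\omega_E}=1$, yet $\int_{X^\circ}|\dbar u_j|^2_{\omega_E}e^{-\psi_L}dV_{\omega_E}+\int_{X^\circ}\langle A\star u_j,\star u_j\rangle dV_{\omega_E}\to 0$. Passing to a subsequence, $u_j\rightharpoonup u$ weakly in $L^2$. Since $\dbar u_j\to 0$ strongly, $\dbar u=0$ in the distributional sense, so $u$ is a holomorphic $L^2$ section of $\Omega^p_{X^\circ}\otimes L$; and since $v\mapsto\int_{X^\circ}\langle A\star v,\star v\rangle dV_{\omega_E}$ is a nonnegative convex functional, hence weakly lower semicontinuous, we get $\int_{X^\circ}\langle A\star u,\star u\rangle dV_{\omega_E}=0$. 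By the equivalence \eqref{hp} this gives $u\in H^{(p)}$. As orthogonality to the closed subspace $H^{(p)}$ passes to weak limits, also $u\perp H^{(p)}$, whence $u=0$.

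\emph{Step 3: upgrade to strong convergence.} Finally I would show $u_j\to 0$ strongly, contradicting $\|u_j\|=1$. Fix a smooth cutoff $\rho$ equal to $1$ near $E$ and supported in $U$, so that $\{\rho<1\}$ is a compact subset of $X^\circ$ on which $\omega_E$ and $\psi_L$ are smooth. Standard interior compactness (solving $\dbar$ locally with small norm via Hörmander and applying Montel to the resulting holomorphic parts, using $\dbar u_j\to 0$ and the uniform $L^2$-bound) gives $u_j\to u=0$ in $L^2_{\mathrm{loc}}$; in particular $\int_{\{\rho<1\}}|u_j|^2\to 0$ and $\int|\dbar\rho\wedge u_j|^2\to 0$, since $\dbar\rho$ is supported in $\{0<\rho<1\}$. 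Near $E$, the extended inequality of Step 1 applied to $\rho u_j$ gives $\int_{\{\rho=1\}}|u_j|^2\le\int_U|\rho u_j|^2\le 2C\|\dbar u_j\|^2+2C\int|\dbar\rho\wedge u_j|^2\to 0$. Adding the two regions gives $\|u_j\|^2\to 0$, the required contradiction. The main obstacle is precisely this non-concentration statement along $E$: controlling the mass accumulating at the divisor is exactly what the completeness of $\omega_E$ and Proposition \ref{poinc} are designed to handle, whereas the interior compactness away from $E$ is routine.
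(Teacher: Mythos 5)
Your argument is correct and is essentially the proof the paper relies on: the paper does not reprove this proposition but quotes \cite[Proposition 3.9]{CGP}, whose proof is exactly your contradiction--compactness scheme --- the local Poincar\'e inequality (Proposition \ref{poinc}) extended via the cutoffs of Lemma \ref{cutoff} to rule out mass concentration along $E$, interior compactness away from $E$, and identification of the weak limit as an element of $H^{(p)}$ that is also orthogonal to $H^{(p)}$, hence zero. The details you leave implicit (Friedrichs regularization so that Proposition \ref{poinc} applies to merely $L^2$ compactly supported forms, and lower semicontinuity of the curvature term under weak convergence, e.g.\ via Fatou plus convexity) are routine and do not affect the argument.
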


\noindent We have the following direct consequences of Proposition \ref{cor:2}.

\begin{cor} \cite[Proposition 3.10]{CGP}
	There exists a positive constant $C> 0$ such
	that the following inequality holds
	\begin{equation}\label{eq32334}
	\int_{X^\circ}|u|^2_{\omega_E}e^{-\psi_L}dV_{\omega_E}\leq C \left(\int_{X^\circ}|\dbar u|^2_{\omega_E}e^{-\psi_L}dV_{\omega_E}\right)
	\end{equation}
	for any $L$-valued form $u$ of type $(n, 0)$ which belongs to the domain of $\dbar$ and which is orthogonal to the kernel of $\dbar$.
\end{cor}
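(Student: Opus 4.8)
The plan is to obtain the statement as the special case $p=n$ of Proposition \ref{cor:2}, once we verify that in top holomorphic degree the curvature term on the right-hand side of \eqref{eq3233} drops out and the harmonic space $H^{(n)}$ coincides with $\ker\dbar$. First I would record the key algebraic fact that the operator $A=[i\Theta_{h_L}(L),\Lambda_{\omega_E}]$ annihilates forms of type $(n,0)$. Indeed, for such a form $v$ the contraction $\Lambda_{\omega_E}v$ lives in $\Lambda^{n-1,-1}T_{X^\circ}^*=0$, while $i\Theta_{h_L}(L)\wedge v$ lives in $\Lambda^{n+1,1}T_{X^\circ}^*=0$ because $\dim_{\mathbb C}X=n$; hence $Av=i\Theta_{h_L}(L)\wedge(\Lambda_{\omega_E}v)-\Lambda_{\omega_E}(i\Theta_{h_L}(L)\wedge v)=0$.

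Next, since the Hodge star sends $\Lambda^{n,0}T^*_{X^\circ}$ into $\Lambda^{n-0,n-n}T^*_{X^\circ}=\Lambda^{n,0}T^*_{X^\circ}$, for any $(n,0)$-form $u$ the form $\star u$ is again of type $(n,0)$, so the previous step gives $A\star u=0$ and therefore $\int_{X^\circ}\langle A\star u,\star u\rangle dV_{\omega_E}=0$. This shows that the curvature integral on the right of \eqref{eq3233} vanishes identically when $p=n$, so Proposition \ref{cor:2} specializes to the estimate $\int_{X^\circ}|u|^2_{\omega_E}e^{-\psi_L}dV_{\omega_E}\leq C\int_{X^\circ}|\dbar u|^2_{\omega_E}e^{-\psi_L}dV_{\omega_E}$, valid for every $(n,0)$-form $u$ in the domain of $\dbar$ that is orthogonal to $H^{(n)}$.

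It then remains to identify $H^{(n)}$ with $\ker\dbar$. By the definition \eqref{Hp}, $H^{(n)}$ consists of those $L^2$ holomorphic $L$-valued $(n,0)$-forms $F$ for which $\int_{X^\circ}\langle A\star F,\star F\rangle dV_{\omega_E}=0$; but by the computation above this last condition is automatic in degree $p=n$, so $H^{(n)}=H^0(X^\circ,\Omega_{X^\circ}^{n}\otimes L)\cap L^2$. By hypoellipticity of $\dbar$ on $(n,0)$-forms (a weakly $\dbar$-closed $L^2$ section of $\Omega_{X^\circ}^{n}\otimes L$ is holomorphic), this space is exactly the kernel of $\dbar$ on $L^2_{n,0}(X^\circ,L)$. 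Hence orthogonality to $H^{(n)}$ is the same as orthogonality to $\ker\dbar$, and the specialized estimate is precisely \eqref{eq32334}.

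The argument is almost entirely formal; the only point requiring genuine care is the identification $H^{(n)}=\ker\dbar$, that is, justifying that an $L^2$ $(n,0)$-form weakly in the kernel of $\dbar$ is actually holomorphic, which is the elliptic regularity step. The conceptual heart is the single observation that $A$ is trivial on top-degree holomorphic forms, which at once removes the curvature term from the inequality and makes the harmonic-space constraint in Proposition \ref{cor:2} collapse to plain $\dbar$-closedness.
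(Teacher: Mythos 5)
Your proposal is correct and takes essentially the same route as the paper: specialize Proposition \ref{cor:2} to $p=n$, using the observation that $A=[i\Theta_{h_L}(L),\Lambda_{\omega_E}]$ vanishes in bidegree $(n,0)$ so that both the curvature term in \eqref{eq3233} and the constraint defining $H^{(n)}$ collapse. The paper states this in one line and leaves the identification $H^{(n)}=\ker\dbar$ implicit; your write-up merely makes that elliptic-regularity step explicit.
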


\begin{proof} This follows immediately from Proposition \ref{cor:2} combined with the observation that the
  curvature operator $A$ is equal to zero in bi-degree $(n,0)$.
\end{proof}

\noindent The next statement shows that in bi-degree $(n,p)$ the image of the operator $\dbar^\star$ is closed.

\begin{cor} \cite[Proposition 3.11]{CGP} \label{star closed}
There exists a positive constant $C> 0$ such that the following holds.
	Let $v$ be a $L$-valued form of type $(n,p)$. We assume that $v$ is $L^2$, in the domain of $\dbar$ and orthogonal
	to the kernel of the operator $\dbar^\star$. Then we have
	\begin{equation}\label{eq62}
	\int_{X^\circ}|v|^2_{\omega_E}e^{-\psi_L}dV_{\omega_E}\leq C \int_{X^\circ}|\dbar^\star v|^2_{\omega_E}e^{-\psi_L}dV_{\omega_E}.
	\end{equation}
\end{cor}

\begin{proof}
Let us first observe that the Hodge star $u:= \star v$, of type $(n-p,0)$, is orthogonal to $H^{(n-p)}$. Actually, let us pick $F\in H^{(n-p)}$, it follows from \eqref{hp} that we have $\displaystyle \dbar^\star(\star F)= 0$. In other words, $\star F \in \ker \dbar^\star$.
We thus have
$$\int_{X^\circ} \langle u, F\rangle   d V_{\omega_E} = \int_{X^\circ} \langle v, \star F \rangle  d V_{\omega_E} =0.$$
Applying Bochner formula \eqref{bochner2} to $v$ and using the facts that $\dbar  v=0$ (since $v$ is orthogonal to $\ker \dbar^*$) and that $\dbar^*u=0$ for degree reasons, we get
\begin{equation}\label{eq63}
\|\dbar^*v\|^2_{L^2}=\|\dbar u\|^2_{L^2}+\int_{X^\circ}\langle A \star u, \star u\rangle dV_{\omega_E}
\end{equation}
This proves the corollary by applying Proposition \ref{cor:2}.
\end{proof}

Now we can prove Theorem \ref{Hodge}.
\begin{proof}[Proof of Theorem~\ref{Hodge}]
In the context of complete manifolds, one has the following decomposition
\begin{equation*}\label{eq33}
L^2_{n, 1}(X^\circ, L)= {\mathcal H}_{n,1}(X^\circ, L)\oplus \overline{ \im \dbar}\oplus \overline{ \im \dbar^\star}.
\end{equation*}
The reader can consult the reference \cite[chapter VIII, pages
367-370]{bookJP} for details. We also know that the adjoint $\dbar^\star$ and $D^{\prime\star}$ in the sense of von Neumann coincide with the formal
adjoint of $\dbar$ and $D^\prime$ respectively. It remains to show that the range of the $\dbar$ and $\dbar^\star$ operators are closed concerning the $L^2$ topology. We will utilize the inequalities \eqref{eq32334} and \eqref{eq62}, the former shows that the image of $\dbar$ is closed, and the latter does the same for $\dbar^\star$. Indeed, suppose that there exists a sequence ${\dbar u_j}\rightarrow u$ in the $L^2$ space. Since $\dbar u_j = \dbar(u_j^1 + u_j^2)= \dbar u_j^1$ (here we settle $u_j^1 \perp \ker \dbar$ and $u_j^2 \in \ker \dbar$), we can assume each $u_j$ are orthogonal to the kernel space of $\dbar$. By \eqref{eq32334}, we obtain
$$ \int_{X^\circ}|u_j-u_k|^2_{\omega_E}e^{-\psi_L}dV_{\omega_E}\leq C \left(\int_{X^\circ}|\dbar u_j -\dbar u_k|^2_{\omega_E}e^{-\psi_L}dV_{\omega_E}\right),
$$
therefore ${u_j}$ is Cauchy sequence. On the other hand, we know $\dbar$ is a closed operator. This yields that $u \in \im \dbar$. Similarly, we can prove the image of $\dbar^{\ast}$ is closed too.
\end{proof}

\end{document}